\pgfplotsset{compat=1.11}
\newtheorem{theorem}{Theorem}[section]
\newtheorem*{theorem*}{Theorem}
\newtheorem{corollary}{Corollary}[theorem]
\newtheorem{lemma}[theorem]{Lemma}
\theoremstyle{remark}
\newtheorem{remark}{Remark}[section]
\newcommand{\R}{\mathbb{R}}
\newcommand{\Z}{\mathbb{Z}}
\newcommand{\T}{\mathbb{T}^2}
\newcommand{\Td}{\mathbb{T}^d}
\newcommand{\dd}{\mathcal{D}}
\definecolor{orange}{gray}{0.4}
\title[Lattice Point Discrepancy Estimates]{Higher Moments for Lattice Point Discrepancy of Convex Domains and Annuli}
\author{Xiaorun Wu\vspace{-1em}}
\email{xiaorunw@princeton.edu}
\begin{document}
\maketitle 

\begin{abstract}
    Given a domain $\Omega \subseteq \R^2$, let $\dd(\Omega,x,R)$ be the number of lattice points from $\Z^2$ in $R\Omega -x$, for $R \ge 1$ and $x\in \T$, minus the area of $R\Omega$: $$\dd(\Omega,x,R) = \# \{ (j,k) \in \mathbb{Z}^2 :(j-x_1,k-x_2) \in R\Omega \} - R^2|\Omega|.$$ We call $\int_{\mathbb{T}^2}|\dd(\Omega,x,R)|^pdx$ the $p$-th moment of the discrepancy function $\dd$. In 2014, Huxley showed that for convex domains with sufficiently smooth boundary, the fourth moment of $\dd$ is bounded by $\mathcal{O}(R^2\log R)$, and in 2019, Colzani, Gariboldi, and Gigante extended this result to higher dimensions.
    
    In this paper, our contribution is twofold: first, we present a simple direct proof of Huxley's 2014 result; second, we establish new estimates for the $p$-th moments of lattice point discrepancy of annuli of radius $R$, and any fixed thickness $0<t<1$ for $p\ge 2.$
\end{abstract}

\section{Introduction and Motivation}

\subsection{Background}
Define $N: [1,\infty)\rightarrow \R$ by
\begin{equation}\label{circledis}
N(R) = \#\{(n_1,n_2) \in \Z^2: n_1^2+n_2^2 \leq R^2\}, 
\end{equation}
that is, $N(R)$ is the number of lattice points from $\Z^2$ inside the disk of radius $R$ centered at the origin. In \cite{Gauss}, Gauss gives the naive estimate $N(R) = \pi R^2+\mathcal{O}(R)$. The proof follows from identifying each lattice point with the square of side length 1 which has the lattice point as its center, see Figure \ref{lattice} below, and noting that the collection of squares contains a disk of radius $R-\sqrt{2}/{2}$ and is contained in a disk of radius $R+\sqrt{2}/{2}$. Hence 
$$
\pi (R-\sqrt{2}/2)^2 \leq N(R)\leq \pi (R
    +\sqrt{2}/2)^2,
$$
which implies that $|N(R)-\pi R^2|\leq 2\sqrt{2}\pi R$.

\begin{figure}[h!]
\centering
    \includegraphics[width = 0.4\textwidth]{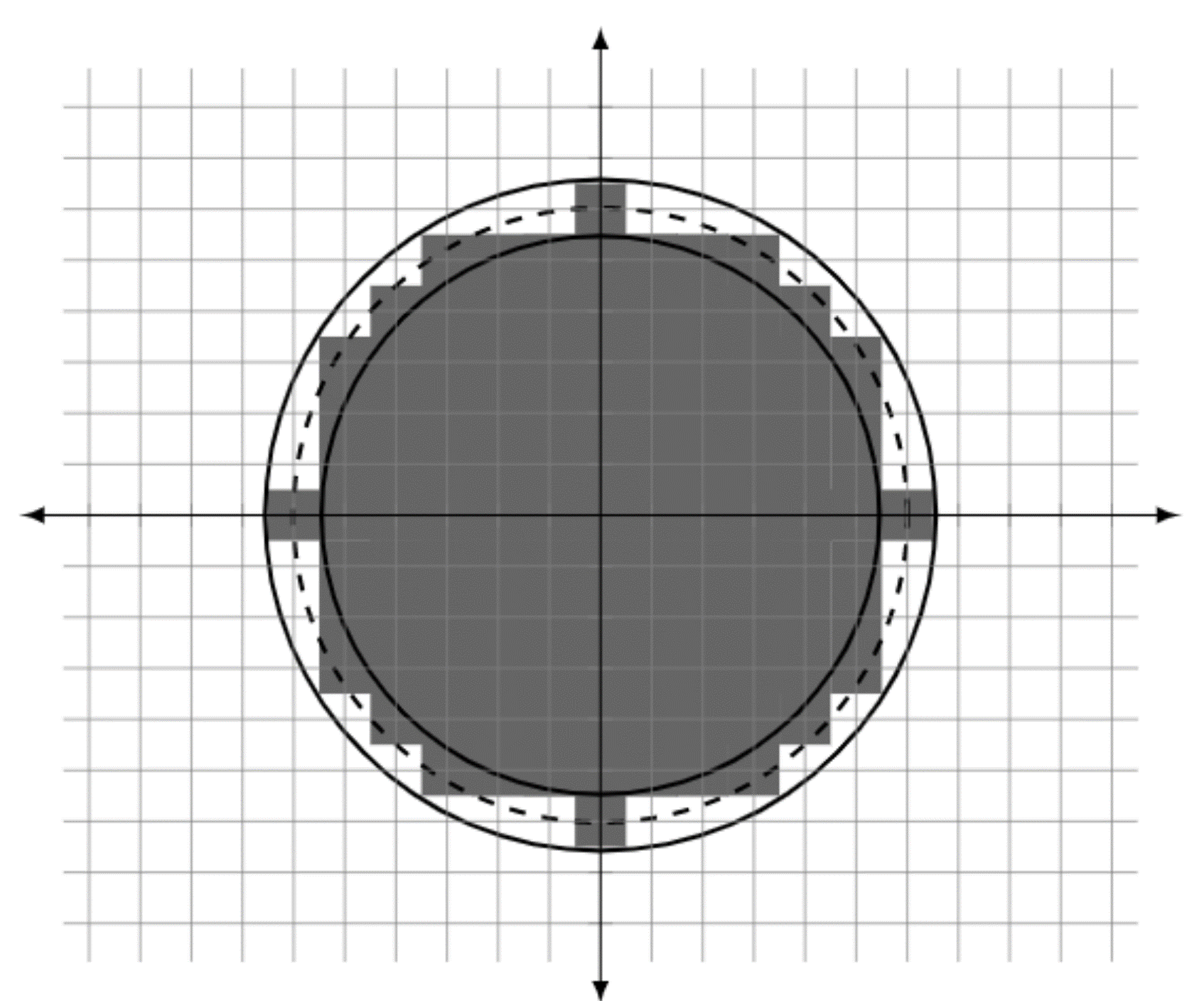}
\caption{An illustration of Gauss's proof.}\label{lattice}
\end{figure}
It is conjectured that
$N(R) = \pi R^2+\mathcal{O}(R^{\frac{1}{2}+\varepsilon})$ for any fixed
$\epsilon > 0$, which is known as the Gauss circle problem. 
There is some empirical evidence that this conjecture is plausible. For example, in Figure \ref{ddd} we plot $(N(R)-\pi R^2)/\sqrt{R}$ for 100,000 values of $R\in[10^6,10^7]$, which seems to suggest that $N(R) = \pi R^2+\mathcal{O}(R^{1/2+\varepsilon})$. 

\begin{figure}[h!]
  \includegraphics[width=0.6\linewidth]{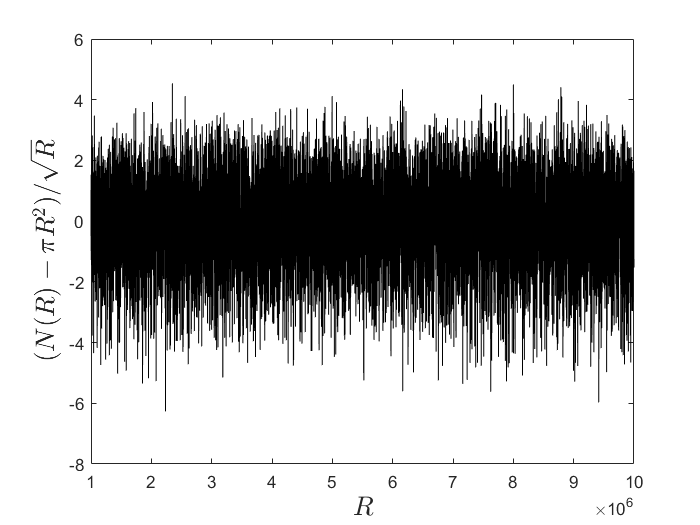}
  \centering
\caption{The function $(N(R)-\pi R^2)/\sqrt{R}$ for 100,000 values of $R\in[10^6,10^7]$.}\label{ddd} 
\end{figure}

Various attempts have been made to bound the discrepancy function $|N(R)-\pi R^2|$. Using techniques from Fourier analysis, Voronoi \cite{Voronoi}, Sierpiński \cite{Sierpinski}, van der Corput \cite{van} improve the naive estimate $|N(R) - \pi R^2| =\mathcal{O}(R)$ to $\mathcal{O}(R^{2/3})$ (see for example Stein and Shakarchi's  \cite{Stein2}). The current best bound known due to Huxley \cite{Huxley2} is $|N(R) - \pi R^2| = \mathcal{O}(R^{131/208})$. On the other side, there are lower bounds. For example,in \cite{Hardy} Hardy established that the discrepancy $|N(R) - \pi R^2|$ cannot be $o\left(R^{{1/2}}(\log R)^{{1/4}}\right)$. Since approaching the problem directly has not led to a solution to the conjecture, many indirect methods of studying the discrepancy $N(R)-\pi R^2$ have emerged.

\subsection{Moment estimation}
One approach to understand the discrepancy function is to study its distribution over shifts $x\in\T$ of the original domain. Let $\Omega\subseteq\R^2$ and $\chi_\Omega$ be its indicator function, where $\T = [0,1]\times [0,1]$. Assume $x = (x_1, x_2)\in\T$, and let $R\Omega-x := \{y\in \R^2: (y+x)/R\in\Omega \}.$ We define $\dd(\Omega,\cdot,\cdot): \T\times[1,\infty)\rightarrow \R$ by
\begin{equation}\label{discre}
\dd(\Omega,x,R) = \sum_{k\in\Z^2}\chi_{R\Omega-x}(k)-R^2|\Omega|,
\end{equation}
and define the $L^p$ norm of the discrepancy function by 
\begin{equation}\label{lpnorm}
 \|\dd(\Omega,x,R)\|_{L^p} := \left(\int_{\mathbb{T}^2}|\dd(\Omega,x,R)|^pdx\right)^{1/p}, \end{equation} 
 for $p \ge 2.$ 
We call the $p$-th moment of $\dd$ 
\begin{equation}\label{moment}
 \|\dd(\Omega,x,R)\|_{L^p}^p := \int_{\mathbb{T}^2}|\dd(\Omega,x,R)|^pdx, 
\end{equation}
for $p \ge 2.$ Kendall showed in \cite{Kendall} that if $\Omega$ is a bounded convex domain whose boundary $\partial \Omega$ is smooth and has nowhere vanishing Gaussian Curvature, then the second moment $\|\dd(\Omega,x,R)\|_{L^2}^2$ is $\mathcal{O}(R)$. Indeed, there is a concise proof of this result using Hardy's identity (see pg. 380-381 of \cite{Stein2}) and Parseval's identity, see for example \cite{Huxley}.

\subsection{Higher moment estimation} It is natural to ask if this analysis can be extended to higher moments. Bounding higher moments is useful for studying the probabilistic distribution of $\dd(\Omega,x,R)$, and could provide insights into the original Gauss circle problem; by the method of moments, if we were able to bound a sequence of higher moments, then it might be possible to obtain a bound for $\|\dd(\Omega,x,R)\|_{L^\infty}$, which is equivalent to bounding $\dd(\Omega,x,R)$ in the almost everywhere sense.

Brandolini, Colzani, Gigante, and Travaglini showed in Theorem 5 of \cite{Brandolini} that for every $p \ge 1$, there exists an constant $C$ such that for every $R > 2$, we have $||\dd(\Omega,x,R)||_{L^p} \ge CR^{1/2}$. It is yet to be shown that the $L^p$ norm is $\mathcal{O}(R^{1/2})$ for $p\ge2$. Again empirically, it is plausible that $\|\dd(\Omega,x,R)\|_{L^p} = \mathcal{O}(R^{1/2})$ for fixed $p \ge 2$, because for different $x\in\T$, $\dd(\Omega,x,R)$ seems to act like uncorrelated random variables. For example, we plotted $\dd(\Omega,x,R)/\sqrt{R}$ against $R$ for various $x\in\T$ in Figure \ref{dd3} below.

\begin{figure}[h!]
\minipage{0.33\textwidth}
  \includegraphics[width=\linewidth]{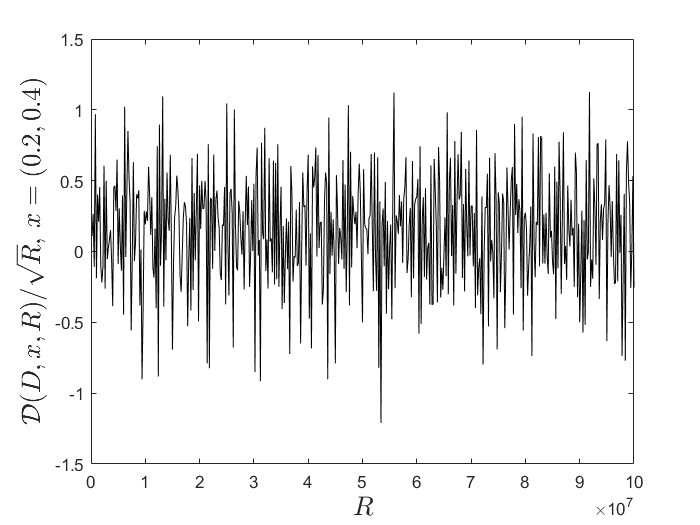}
\endminipage\hfill
\minipage{0.33\textwidth}
  \includegraphics[width=\linewidth]{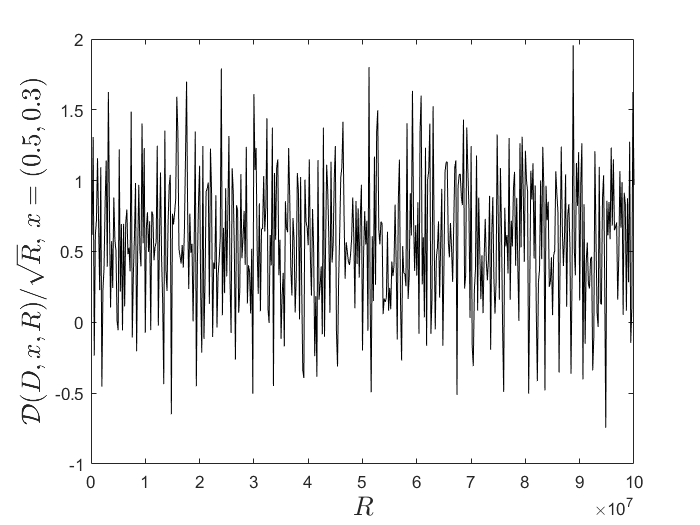}
\endminipage
\minipage{0.33\textwidth}
  \includegraphics[width=\linewidth]{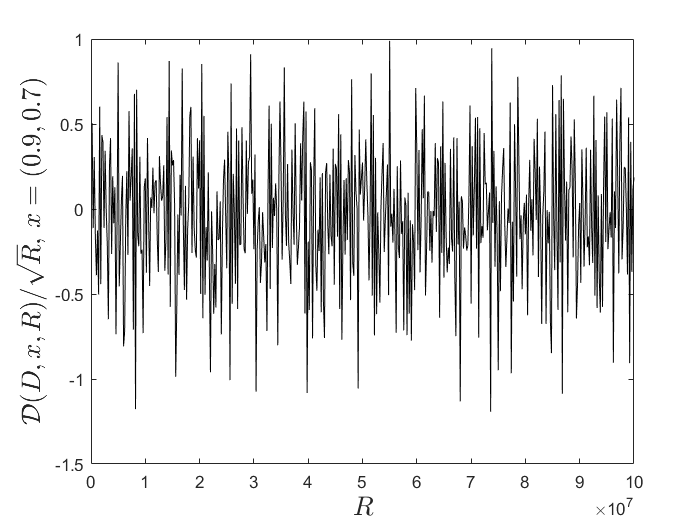}
\endminipage
\caption{The function $\dd(D, x,R)/\sqrt{R}$ for $5,000$ values of $R\in [10^6,10^7]$ and shifts $x= (0.2,0.4),\; (0.5,0.3),\; (0.9,0.7)$ is plotted left, middle, right, respectively. Here $D$ denote unit disk.}\label{dd3}
\end{figure}

In \cite{Huxley}, Huxley established the following estimate for the fourth moment $$\|\dd(\Omega,x,R)\|_{L^4}^4=\mathcal{O}(R^2\log R).$$ Colzani, Gariboldi, and Gigante in \cite{Colzani2} and \cite{Colzani3} have expanded upon Huxley's result and have generalized to higher dimensions. In this paper, we present a simple proof of Huxley's result, and prove new results about annuli, which serve as a potential starting point for further research.

\subsection{Motivation for considering thin annuli}
The discrepancy function of the thin annuli has been extensively studied. For example, Sinai proved in \cite{Sinai} that the number of integer points inside a thin annulus of fixed area $\lambda$, of random shape and sufficiently-large radius $R$, with a suitable definition of randomness, converges in distribution to a Poisson random variable with parameter $\lambda$. One way to study a probabilistic distribution is the method of moments. In this paper, we only consider random shifts, where Sinai's result about the limiting distribution is not known to hold.

We define the annulus $A(R,t)\subseteq\R^2$, of radius $R$ and ring thickness $2t$ by:
$$
A(R,t) = \{y\in \R^2: R-t\leq|y|\leq R+t\}.
$$


\noindent In additional to motivation from Sinai \cite{Sinai}, the discrepancy function of annulus is worth-studying, as it might benefit the study of arithmetic function $r_2(k):= \#\{(n_1,n_2)\in\Z^2: n_1^2+n_2^2 =k\}$: a sufficiently thin annulus might provide extra information about the local behavior of $r_2(k)$ for sufficiently large $k$.
We define the discrepancy function  $\dd(A,\cdot,\cdot,\cdot): \T\times[1,\infty)\times(0,1)\rightarrow \R$ by 
\begin{equation}\label{anndis}
\dd (A,x,R,t) = \sum_{k\in\Z^2}\chi_{A(R,t)-x}(k)-|A(R,t)|.
\end{equation}
The second moment of the discrepancy function is extensively studied. Parnovski and Sidorova in \cite{Parnovski} show that if $t\rightarrow 0$ as $R\rightarrow\infty$, then there is $c>0$ such that for all $R$ large enough, $\|\dd (A,x,R,t)\|_{L^2}\leq cR^{1/2}t^{1/2}$. (See \cite{Parnovski}, pp. 310).
 Colzani, Gariboldi, and Gigante further improve this result to annuli of any dimensions $d$: in Theorem 1 of \cite{Colzani1} they show that for every $\alpha\in\R$ with $\alpha > (d-1)/(d+1)$, there exists $0<\beta < 1$ and $C > 0$ such that for every $1\leq r < +\infty$ and every $0< t \leq r^{-\alpha}$,
$$
\left|\int_{\Td}|\dd(A,x,R,\frac{t}{2})|^2dx-|A(R,\frac{t}{2})|\right|\leq C|A(R,\frac{t}{2})|t^\beta,
$$
in fact, their results hold for more general annuli formed from convex domains.
In this paper, using techniques of Hausdorff Young and interpolation inequalities, we give $p$-th higher moment estimates for all $p \ge 2$ for the thin annuli of radius $R$ and thickness $t$ for any arbitrary $|t|<1$.
\section{Main Results}

\subsection{Notation}
For all subsequent discussions, we write $h(R) \lesssim g(R)$ to denote that
$h(R) \le C g(R)$ for sufficiently large $R$ and an implicit constant $C > 0$. 
We assume that $\Omega \subseteq \R^2$ is a bounded convex domain whose boundary $\partial\Omega$ is smooth and has nowhere vanishing Gaussian curvature, $\chi_\Omega$ denotes its indicator function, and $R\Omega-x :=
\{y\in \R^2: (y+x)/R\in\Omega \}.$ Assume $\dd(\Omega,x,R)$ to be defined as in \eqref{discre},
 and let $\|\dd(\Omega,x,R)\|_{L^p}$ and the $p$-th moment be defined as
 in \eqref{lpnorm} and \eqref{moment} respectively.


\subsection{Main Theorems}
First, we give a simple direct proof of the result about the fourth moment of the discrepancy function, which was initially proved by Huxley in \cite{Huxley}. The techniques used in our proof of this result serve as basis for our investigation of the annuli results.
\begin{theorem}\label{21}
 $\|\dd (\Omega,x,R)\|_{L^4} \lesssim R^{1/2}\log^{1/4}(R).$
\end{theorem}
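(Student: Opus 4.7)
The plan is to compute $\|\dd(\Omega,x,R)\|_{L^4}^4$ through Parseval's identity applied to $|\dd|^2$, after expanding $\dd$ in a Fourier series via Poisson summation and substituting the classical Herz/Hlawka stationary-phase asymptotic for $\widehat{\chi_\Omega}$.

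Concretely, Hardy's identity gives the Fourier expansion
\[
\dd(\Omega, x, R) = \sum_{k \in \Z^2 \setminus \{0\}} \widehat{\dd}(k)\, e^{2\pi i k \cdot x}, \qquad \widehat{\dd}(k) := R^2\, \widehat{\chi_\Omega}(Rk),
\]
and for $\Omega$ smooth convex with nowhere vanishing Gaussian curvature the stationary-phase asymptotic yields
\[
\widehat{\dd}(k) = R^{1/2}\, |k|^{-3/2}\, \alpha_k(R) + O\!\left(R^{-1/2}\,|k|^{-5/2}\right),
\]
where $\alpha_k(R)$ is a bounded combination of oscillating factors of the form $e^{\pm 2\pi i R\,|k|\, h(\pm k/|k|)}$ depending on the support function $h$ of $\Omega$; in particular $|\widehat{\dd}(k)| \lesssim R^{1/2}|k|^{-3/2}$.

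Since $\dd$ is real-valued, $|\dd|^2 = \dd^2$, and Parseval on $\dd^2$ gives
\[
\|\dd\|_{L^4}^4 = \|\dd^2\|_{L^2}^2 = \sum_{n \in \Z^2} |a_n|^2, \qquad a_n := \sum_k \widehat{\dd}(k)\, \widehat{\dd}(n-k).
\]
The $n = 0$ contribution is precisely $a_0^2 = \|\dd\|_{L^2}^4$, which is $\lesssim R^2$ by Kendall's theorem. For $n \neq 0$, a term-by-term estimate combined with the two-dimensional lattice sum $\sum_k (1+|k|)^{-3/2}(1+|n-k|)^{-3/2} \lesssim |n|^{-1}$ gives $|a_n| \lesssim R/|n|$. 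After truncating the Fourier expansion at scale $|k| \lesssim R^C$ (justified by the error term $O(R^{-1/2}|k|^{-5/2})$), the effective range of $n$ is $|n| \lesssim R^C$, and the critical two-dimensional sum $\sum_{|n| \leq N}|n|^{-2} \sim \log N$ yields $\sum_{1 \leq |n| \lesssim R^C} R^2/|n|^2 \lesssim R^2\log R$.

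The main technical obstacle lies in this off-diagonal estimate: the naive bound $|a_n| \lesssim R/|n|$ alone would produce the logarithmically divergent sum $\sum_{n \in \Z^2 \setminus \{0\}} |n|^{-2}$, so extracting only a single logarithm requires care. One exploits either the error term in the asymptotic to control the high-frequency tail and truncate at polynomial scale, or the oscillating phases in $\alpha_k(R)$, which produce additional cancellation in $a_n$ for large $|n|$. A careful book-keeping of these contributions, combined with the diagonal bound $a_0^2 \lesssim R^2$, then gives $\|\dd\|_{L^4}^4 \lesssim R^2 \log R$, i.e., the desired $\|\dd\|_{L^4} \lesssim R^{1/2}\log^{1/4}(R)$.
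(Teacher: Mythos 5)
Your overall architecture matches the paper's: expand $\dd$ in Fourier series, apply Parseval to $\dd^2$, bound the convolution coefficients $a_n=\sum_k\widehat{\dd}(k)\widehat{\dd}(n-k)$ by $R/|n|$ using the lattice sum $\sum_k|k|^{-3/2}|n-k|^{-3/2}\lesssim|n|^{-1}$, and handle $n=0$ separately. But the step you yourself flag as ``the main technical obstacle'' is left as a gesture, and neither of the two fixes you sketch actually closes it. A sharp truncation of the Fourier series of $\dd$ at $|k|\lesssim R^C$ is not justified by the error term $O(R^{-1/2}|k|^{-5/2})$: that error controls individual coefficients, and the tail of the main term is small in $\ell^2$, but an $\ell^2$ (equivalently $L^2$) bound on the high-frequency part of a discontinuous function tells you nothing about its $L^4$ norm, which is what you need here. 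There is no pointwise comparison between $\dd$ and its truncation, so $\|\dd\|_{L^4}$ does not follow from $\|P_{\le R^C}\dd\|_{L^4}$. Your alternative --- extracting cancellation from the oscillating phases $\alpha_k(R)$ --- is essentially Huxley's original route and is genuinely delicate; you do not carry it out, so as written the argument produces the divergent sum $\sum_{n\neq 0}|n|^{-2}$ and stops short of the theorem.

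The paper's resolution is a mollification sandwich, which is the one idea missing from your proposal. One sets $\varphi_\delta(x)=\delta^{-2}\varphi(x/\delta)$ with $\delta=R^{-1/2}$ and uses the pointwise inequality
\begin{equation*}
\chi_{(R-\delta)\Omega-x}*\varphi_\delta \;\le\; \chi_{R\Omega-x} \;\le\; \chi_{(R+\delta)\Omega-x}*\varphi_\delta,
\end{equation*}
so that $|\dd|^4\le|\dd_{-\delta}|^4+|\dd_\delta|^4$ and it suffices to bound the smoothed discrepancies. The factor $\widehat{\varphi}(\delta n)$ then acts as a smooth frequency cutoff: for $|n|\le\sqrt R$ it is $O(1)$ and you recover your bound $|b_{\delta,n}|\lesssim R|n|^{-1}$, while for $|n|>\sqrt R$ its rapid decay upgrades the bound to $|b_{\delta,n}|\lesssim R^2|n|^{-3}$, which is exactly what makes $\sum_n|b_{\delta,n}|^2\lesssim R^2+\sum_{|n|<\sqrt R}R^2|n|^{-2}+\sum_{|n|\ge\sqrt R}R^4|n|^{-6}\lesssim R^2\log R$ converge with only one logarithm. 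The sandwich legitimizes the transfer back to $\dd$, which is precisely the step your sharp truncation cannot perform. If you insert this mollification (and verify, as the paper does, that $\delta=R^{-1/2}$ balances the two regimes), your outline becomes a complete proof.
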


\noindent We shall defer the proof for Theorem \ref{21} to Section 3.

Next, we present our main results about the annulus. Formally, denote $A(R,t)$ to be the annulus $$A(R,t):\{y\in \R^2: R-t \leq |y| \leq R+t \},$$  and $|A(R,t)|$ to be its area. Define the discrepancy function  $\dd(A,\cdot,\cdot,\cdot): \T\times[1,\infty)\times(0,1)\rightarrow \R$ by $$
\dd (A,x,R,t) := \sum_{k\in\Z^2}\chi_{A(R,t)-x}(k)-|A(R,t)|,$$ 
where $A(R,t)-x:=\{y\in \R^2: (y+x)\in A(R,t) \}.$ Further we define the corresponding $L^p$ norm of the discrepancy function by 
\begin{equation}\label{lpnormannu}
\|\dd (A,x,R,t)\|_{L^p} = \left(\int_{\mathbb{T}^2}|\dd(A,x,R,t)|^pdx\right)^{1/p}, 
 \end{equation} 
 for $p \ge 2$, and $\|\dd(A,x,R,t)\|_{L^p}^p .$ the $p$-th moment of $\dd$.

The following theorem is our second main result, which provides estimates for the $p$-th moments of annuli for $p\ge 2$. 
\begin{theorem}\label{22}
Let $\theta > 0$ be any exponent such that $|\dd(D,x,R)|\lesssim R^\theta$ holds uniformly in $R$ and $x$, where $D$ denotes the unit disk, and assume $p \geq 2$.  Then,  \[
\|\dd(A,x,R,t)\|_{L^p} \lesssim
\begin{cases}
    (Rt)^{\frac{1}{p}}R^{\frac{\theta(p-2)}{p}},          & \text{if } R^{1-2\theta} \geq t\\
    R^{\frac{1}{2}}t^{\frac{4-p}{2p}},            & \text{if } R^{1-2\theta} < t \text{, and } p < 4\\
    R^{\frac{\theta(p-4)+2+\varepsilon}{p}},            & \text{if } R^{1-2\theta} < t \text{, and } p \ge 4,
\end{cases}
    \]
for any fixed $\varepsilon > 0.$
\end{theorem}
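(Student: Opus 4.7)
The plan is to reduce the annulus to a difference of two disks, establish three baseline bounds on $\dd(A,x,R,t)$ at $L^\infty$, $L^2$ and $L^4$, and then interpolate in three different ways to produce the three cases. The starting point is the identity
$$\dd(A,x,R,t) = \dd(D,x,R+t) - \dd(D,x,R-t),$$
which comes from $\chi_{A(R,t)} = \chi_{(R+t)D} - \chi_{(R-t)D}$. This plus the hypothesis immediately gives the pointwise bound $\|\dd(A,x,R,t)\|_{L^\infty}\lesssim R^\theta$, and combining it with Theorem~\ref{21} via the triangle inequality gives $\|\dd(A,x,R,t)\|_{L^4}\lesssim R^{1/2}(\log R)^{1/4}$.

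The only nontrivial baseline is the $L^2$ bound $\|\dd(A,x,R,t)\|_{L^2}\lesssim (Rt)^{1/2}$. To get it I Fourier expand $\dd$ in $x\in\T$; Parseval reduces the problem to summing $|\widehat{\chi}_{A(R,t)}(n)|^2$ over $n\in\Z^2\setminus\{0\}$. Using the Bessel formula $\widehat{\chi}_{RD}(n) = (R/|n|)J_1(2\pi R|n|)$ together with the identity $\tfrac{d}{dR}[RJ_1(2\pi R|n|)] = 2\pi R|n|J_0(2\pi R|n|)$ and the mean value theorem in $R$, I derive $|\widehat{\chi}_{A(R,t)}(n)|\lesssim tR^{1/2}|n|^{-1/2}$ for $|n|\leq 1/t$; for $|n|>1/t$ I apply the large-argument Bessel decay $|J_\nu(z)|\lesssim z^{-1/2}$ to each of the two disk contributions separately to get $|\widehat{\chi}_{A(R,t)}(n)|\lesssim R^{1/2}|n|^{-3/2}$. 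Summing squares in each regime yields $Rt$.

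Given these three baselines, the cases follow by interpolation. For Case 1 ($t\leq R^{1-2\theta}$), log-convexity between $L^2$ and $L^\infty$ gives $\|\dd\|_{L^p}^p\leq \|\dd\|_{L^\infty}^{p-2}\|\dd\|_{L^2}^2\lesssim R^{\theta(p-2)+1}t$, matching the stated $(Rt)^{1/p} R^{\theta(p-2)/p}$. For Case 3 ($t>R^{1-2\theta}$, $p\geq 4$), log-convexity between $L^4$ and $L^\infty$ gives $\|\dd\|_{L^p}^p\leq \|\dd\|_{L^\infty}^{p-4}\|\dd\|_{L^4}^4\lesssim R^{\theta(p-4)+2}\log R$, and absorbing $\log R$ into $R^\varepsilon$ produces the third bound. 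For Case 2 ($t>R^{1-2\theta}$, $2\leq p<4$), interpolating between $L^2$ and $L^4$ would introduce a fractional log factor not present in the stated bound, so I will instead apply Hausdorff--Young:
$$\|\dd(A,x,R,t)\|_{L^p}\leq \Bigl(\sum_{n\neq 0}|\widehat{\chi}_{A(R,t)}(n)|^{p'}\Bigr)^{1/p'},\qquad \tfrac{1}{p}+\tfrac{1}{p'}=1,$$
and plug the same two Fourier estimates into the $|n|=1/t$ splitting. For $p<4$ one has $3p'/2>2$, both pieces evaluate to the same order, and a short computation with the exponents produces $\|\dd\|_{L^p}\lesssim R^{1/2}t^{(4-p)/(2p)}$.

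The main obstacle is the small-$|n|$ Fourier estimate: the naive triangle inequality $|\widehat{\chi}_{A(R,t)}(n)|\leq|\widehat{\chi}_{(R+t)D}(n)|+|\widehat{\chi}_{(R-t)D}(n)|$ loses the factor of $t$ entirely and only reproduces the single-disk bound. Extracting the cancellation between the two Bessel terms $RJ_1(2\pi R|n|)$ at $R\pm t$ quantitatively is what produces the sharp $t$-dependence throughout; once that is done, the three interpolation steps are essentially arithmetic with the exponents.
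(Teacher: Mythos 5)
Your proposal is correct and follows essentially the same route as the paper: reduce the annulus to a difference of two disks to get $\|\dd(A,x,R,t)\|_{L^\infty}\lesssim R^\theta$, extract the cancellation between the two Bessel terms to obtain $|\widehat{\chi}_{A(R,t)}(n)|\lesssim R^{1/2}|n|^{-3/2}\min(1,t|n|)$, feed this into Parseval/Hausdorff--Young with the splitting at $|n|=1/t$, and then interpolate against $L^\infty$ to produce the three cases. The only (harmless) deviation is in Case~3, where you interpolate against the $L^4$ bound of Theorem~\ref{21} and absorb the $\log R$ into $R^\varepsilon$, while the paper interpolates against its $L^{4-\varepsilon_0}$ Hausdorff--Young estimate; both yield the stated exponent for any fixed $\varepsilon>0$.
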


 This result serves as a basis for more research on higher moments of annuli, see discussion section in \ref{discussion}. Observe that the estimate depends on $t$ and $\theta$: the first estimate is in general stronger for very thin annuli (e.g. $|t|= o(R^{-1})$), whereas the second is sharper for thicker annuli. We provide two examples in the following corollaries: in the first corollary, we fix $\theta = \frac{2}{3}$ as given by van der Corput (see \cite{van}):
 \begin{corollary}
 Under the hypothesis of Theorem \ref{22}, fix $\theta = \frac{2}{3}$, then 
 \[
\|\dd(A,x,R,t)\|_{L^p} \lesssim
\begin{cases}
    (Rt)^{\frac{1}{p}}R^{\frac{2(p-2)}{3p}},          & \text{if } R^{-1/3} \geq t\\
    R^{\frac{1}{2}}t^{\frac{4-p}{2p}},            & \text{if } R^{-1/3} < t \text{, and } p < 4\\
    R^{\frac{2(p-4)+2+\varepsilon}{3p}} ,           & \text{if } R^{-1/3} < t \text{, and } p \ge 4,
\end{cases}
    \]
for any fixed $\varepsilon > 0.$
 \end{corollary}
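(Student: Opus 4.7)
The plan is to derive this corollary as an immediate specialization of Theorem \ref{22} by fixing $\theta = 2/3$. The only substantive point is to confirm that this value of $\theta$ is admissible in the sense required by the theorem; everything else is arithmetic substitution.

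For admissibility, I would first recall the classical van der Corput bound $|N(R) - \pi R^2| \lesssim R^{2/3}$ for the centered unit disk, and then argue that it holds uniformly over translates $x \in \T$. The standard route is Poisson summation, which gives a Fourier expansion of $\dd(D,x,R)$ as
\[
\dd(D,x,R) = \sum_{k \in \Z^2 \setminus \{0\}} R^2\, \widehat{\chi_D}(Rk)\, e^{-2\pi i k\cdot x};
\]
the shift $x$ only modulates each term by a unimodular phase, while the pointwise decay of $\widehat{\chi_D}$ produced by stationary phase (which exploits the nonvanishing curvature of $\partial D$) does not depend on $x$. Consequently $\sup_{x \in \T} |\dd(D,x,R)| \lesssim R^{2/3}$ uniformly, so $\theta = 2/3$ is legitimate in Theorem \ref{22}.

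With admissibility in hand I would then simplify each of the three cases in turn. The threshold $R^{1-2\theta}$ becomes $R^{-1/3}$; the first-case exponent $\theta(p-2)/p$ becomes $2(p-2)/(3p)$; the second case has no $\theta$-dependence and carries over verbatim; and in the third case $\theta(p-4) + 2$ and the outer division by $p$ combine to give the displayed bound, after renaming $\varepsilon$ to absorb multiplicative constants. I do not anticipate any genuine obstacle here: once the van der Corput bound is noted to be translation-uniform, the corollary is essentially clerical, with all the analytic content packaged into Theorem \ref{22}.
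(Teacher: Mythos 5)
Your overall route — verify that $\theta=2/3$ is admissible and then substitute into Theorem \ref{22} — is exactly what the paper intends (the paper offers no separate proof and simply cites van der Corput). But your claim that the third case is ``clerical'' does not survive the arithmetic. Substituting $\theta=2/3$ into $R^{(\theta(p-4)+2+\varepsilon)/p}$ gives
\[
R^{\frac{\frac{2}{3}(p-4)+2+\varepsilon}{p}}=R^{\frac{2(p-4)+6+3\varepsilon}{3p}},
\]
whereas the corollary displays $R^{\frac{2(p-4)+2+\varepsilon}{3p}}$. Renaming $3\varepsilon$ as $\varepsilon$ is fine, but the additive gap between $6$ and $2$ in the numerator is a fixed deficit of $\frac{4}{3p}$ in the exponent that no renaming of $\varepsilon$ can absorb; the displayed bound is strictly stronger than what Theorem \ref{22} yields. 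The statement as printed is presumably a typo (only the $\theta(p-4)$ term was converted to thirds), but a correct proof must either derive the exponent $\frac{2(p-4)+6+\varepsilon}{3p}$ or flag the discrepancy — asserting that the substitution ``combines to give the displayed bound'' is the one place where the argument actually fails.

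On admissibility: the conclusion that $\sup_{x\in\T}|\dd(D,x,R)|\lesssim R^{2/3}$ is true and is what the paper implicitly assumes, but your justification via termwise Poisson summation does not work as written. The stationary-phase decay $|\widehat{\chi_D}(Rk)|\lesssim |Rk|^{-3/2}$ gives termwise bounds $R^{1/2}|k|^{-3/2}$, and $\sum_{k\in\Z^2\setminus\{0\}}|k|^{-3/2}$ diverges, so you cannot pass absolute values through the sum to conclude any pointwise bound, let alone $R^{2/3}$. The genuine content of van der Corput's estimate lies in exponential-sum cancellation (or a mollification argument), and one must check that those estimates are uniform in the phase introduced by the shift $x$ — which they are, but that is the fact to cite, not a consequence of unimodularity of the phases alone.
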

 In the second corollary, we fix $t = R^{-1/2}$, and we have the following results: 
\begin{corollary}
 Under the hypothesis of Theorem \ref{22}, fix $t = R^{-1/2}$ and let $p \geq 2$. Then $\|\dd(A,x,R,R^{-1/2})\|_{L^p}\lesssim R^{\frac{\theta(p-2)}{p}+\frac{1}{2p}}$. 
\end{corollary}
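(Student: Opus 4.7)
The plan is to derive this corollary as an immediate specialization of Theorem \ref{22} at $t = R^{-1/2}$, so the only real content is determining which of the three cases applies and verifying the resulting exponent. First, I would compare the threshold $R^{1-2\theta}$ to $t = R^{-1/2}$: these quantities coincide exactly when $\theta = 3/4$, so Case 1 of Theorem \ref{22} applies precisely when $\theta \le 3/4$.

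In the typical regime $\theta \le 3/4$ (which covers all admissible values of interest, since van der Corput already supplies $\theta = 2/3$), substituting $t = R^{-1/2}$ into the Case 1 estimate $(Rt)^{1/p} R^{\theta(p-2)/p}$ would yield
\[
(R \cdot R^{-1/2})^{1/p} R^{\theta(p-2)/p} \;=\; R^{\frac{1}{2p}} \, R^{\frac{\theta(p-2)}{p}} \;=\; R^{\frac{\theta(p-2)}{p} + \frac{1}{2p}},
\]
which matches the claimed bound exactly. In the complementary regime $\theta > 3/4$, Cases 2 and 3 of Theorem \ref{22} apply according to whether $p < 4$ or $p \ge 4$; in each subcase, I would check the elementary algebraic inequality that the exponents $\tfrac{3p-4}{4p}$ and $\tfrac{\theta(p-4)+2+\varepsilon}{p}$ are both bounded above by $\tfrac{\theta(p-2)}{p} + \tfrac{1}{2p}$, which in both subcases reduces to the observation that $2\theta > 3/2$.

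The argument is essentially a one-line substitution into the main theorem together with bookkeeping of the case split, so there is no genuine analytic obstacle — this corollary is intended to serve as a clean illustrative specialization of Theorem \ref{22} rather than as an independent result. The only minor care needed is verifying that Case 1's exponent dominates those of Cases 2 and 3 at $t = R^{-1/2}$, which is why the stated bound can be written uniformly for all $\theta$ and all $p \ge 2$.
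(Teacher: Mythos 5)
Your proposal is correct and follows exactly the route the paper intends: the corollary is a direct specialization of Theorem \ref{22} at $t = R^{-1/2}$, where Case 1 applies (since the known values of $\theta$ satisfy $\theta \le 3/4$) and the substitution $(R\cdot R^{-1/2})^{1/p}R^{\theta(p-2)/p} = R^{\frac{\theta(p-2)}{p}+\frac{1}{2p}}$ gives the stated exponent immediately. Your additional check that the Case 2 and Case 3 exponents are dominated by the claimed one when $\theta > 3/4$ (using $2\theta > 3/2$ and taking $\varepsilon$ small) is a small but genuine improvement in rigor over the paper, which states the corollary without addressing that regime.
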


Using the estimate of $\theta = \frac{2}{3}$ by van der Corput \cite{van}, we have $\|\dd(A,x,R,t)\|_{L^p}\lesssim  R^{\frac{2}{3}-\frac{5}{6p}}$. So take $p=4$ for example, we have $\|\dd(A,x,R,t)\|_{L^4}\lesssim R^{\frac{11}{24}}$, which is a strict improvement to the $L^4$ moment estimate of the bounded convex domain $\Omega$, where the best known result is $\mathcal{O}(R^{1/2}(\log R)^{1/4})$. 
We shall now proceed proving these theorems.

\section{Proof of Theorem \ref{21}}

\subsection{Technical lemmas}
Suppose that $\varphi : \mathbb{R}^2 \rightarrow
\mathbb{R}$ is a non-negative $C^\infty$ bump function supported on the unit
disc, and set $\varphi_\delta(x) = \delta^{-2} \varphi(x/\delta)$.
For $|\delta| < 1$, define $\dd_\delta(\Omega,\cdot,\cdot) :
\T\times [1,\infty) \rightarrow \mathbb{R}$ by
\begin{equation}\label{discr}
\dd_\delta(\Omega,x,R) = \sum_{k \in \mathbb{Z}^2} \left(\chi_{(R+\delta) \Omega - x} *
\varphi_{|\delta|}(k)\right) - R^2 |\Omega|,
\end{equation}
where $$
(f*g)(k):= \int_{\R^2}f(k-y)g(y)dy.
$$
Let $(a_{\delta,n})_{n\in\Z^2}$ and $(b_{\delta,n})_{n\in\Z^2}$ be the Fourier series of $\dd_\delta(\Omega,x,R)$ and $(\dd_\delta(\Omega,x,R))^2$, respectively, that is,
$$
a_{\delta,n} = \int_{\mathbb{T}^2}
\dd_\delta(\Omega,x,R) e^{-2\pi i n \cdot x} dx, \quad\text{and}\quad b_{\delta,n} =
\int_{\mathbb{T}^2} (\dd_\delta(\Omega,x,R))^2 e^{-2\pi i n \cdot x} dx,
$$ 
for $n \in \mathbb{Z}^2$. Before proving Theorem \ref{21}, we establish two technical lemmas:

\begin{lemma}
Let $0<\delta < 1$. Then,
\begin{equation} \label{fineq}
|\dd(\Omega,x,R)|^p \le |\dd_{-\delta}(\Omega,x,R)|^p + |\dd_\delta(\Omega,x,R)|^p,
\end{equation} 
for all $p\ge 1$.
\end{lemma}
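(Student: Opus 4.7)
My plan is to derive \eqref{fineq} by sandwiching $\dd(\Omega,x,R)$ between $\dd_{-\delta}(\Omega,x,R)$ and $\dd_\delta(\Omega,x,R)$ pointwise in $x$, and then to dispose of the $p$-th power by a one-line case analysis on the sign of $\dd(\Omega,x,R)$. The intuition is that, because $\varphi_{|\delta|}$ is a non-negative probability bump supported in the ball of radius $|\delta|$, the smoothed indicators $\chi_{(R\pm\delta)\Omega-x}*\varphi_{|\delta|}$ dominate, respectively are dominated by, $\chi_{R\Omega-x}$ at every lattice point.

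First I would prove the summand-wise inequalities
\[
(\chi_{(R-\delta)\Omega-x} * \varphi_{\delta})(k) \;\le\; \chi_{R\Omega-x}(k) \;\le\; (\chi_{(R+\delta)\Omega-x} * \varphi_{\delta})(k)
\]
for every $k\in\Z^2$. Since $\varphi_\delta \ge 0$ has total mass $1$ and is supported in $B(0,\delta)$, the right-hand inequality reduces to the geometric containment $R\Omega + B(0,\delta) \subseteq (R+\delta)\Omega$, while the left-hand inequality reduces to the dual statement $(R-\delta)\Omega + B(0,\delta) \subseteq R\Omega$. Choosing coordinates so that the origin lies in $\Omega$ and using that $\Omega$ contains some ball $B(0,r_0)$, both containments follow from the Minkowski identity $s\Omega + t\Omega = (s+t)\Omega$ (valid for convex $\Omega$ containing the origin and $s,t>0$) together with $B(0,r_0\delta)\subseteq \delta\Omega$, after absorbing $r_0$ into a normalization of $\Omega$ (equivalently, of the bump $\varphi$). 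Summing the summand-wise inequalities over $k\in\Z^2$ and subtracting $R^2|\Omega|$ from each of the three expressions then gives the sandwich
\[
\dd_{-\delta}(\Omega,x,R) \;\le\; \dd(\Omega,x,R) \;\le\; \dd_\delta(\Omega,x,R).
\]

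With the sandwich in hand, the conclusion is immediate by cases on the sign of $\dd(\Omega,x,R)$. If $\dd(\Omega,x,R)\ge 0$, the upper bound forces $\dd_\delta(\Omega,x,R)\ge \dd(\Omega,x,R)\ge 0$, so $|\dd(\Omega,x,R)|^p \le |\dd_\delta(\Omega,x,R)|^p$; if $\dd(\Omega,x,R)<0$, the lower bound forces $\dd_{-\delta}(\Omega,x,R)\le \dd(\Omega,x,R)<0$, so $|\dd(\Omega,x,R)|^p \le |\dd_{-\delta}(\Omega,x,R)|^p$. Adding the remaining non-negative term then yields \eqref{fineq}. The main obstacle I anticipate is not the algebra but the Minkowski-type inclusion step: the natural inclusion $R\Omega + B(0,\delta) \subseteq (R+\delta)\Omega$ holds with the stated parameter $\delta$ only once one normalizes $\Omega$ so that $B(0,1)\subseteq \Omega$, and one must verify that this normalization is consistent with the conventions implicit in \eqref{discr}.
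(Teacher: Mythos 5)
Your proposal is correct and follows essentially the same route as the paper: establish the pointwise sandwich $\chi_{(R-\delta)\Omega-x}*\varphi_\delta \le \chi_{R\Omega-x} \le \chi_{(R+\delta)\Omega-x}*\varphi_\delta$, sum over $k\in\Z^2$, subtract $R^2|\Omega|$, and conclude by the sign dichotomy. In fact you supply two details the paper leaves implicit --- the Minkowski-sum justification of the mollification inequality (including the correct observation that one needs $B(0,1)\subseteq\Omega$ or an equivalent normalization of $\varphi$) and the explicit case analysis on the sign of $\dd(\Omega,x,R)$ --- so your write-up is, if anything, more complete.
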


\begin{proof}
We note that \begin{equation} \label{mollify}
\chi_{(R - \delta)\Omega - x} * \varphi_\delta (y) \le \chi_{R \Omega -x }(y)
\le \chi_{(R+\delta) \Omega -x} * \varphi_\delta (y),
\end{equation}
for all $y \in \mathbb{R}^2$. Summing over $k\in\Z^2$ gives
$$
\sum_{k\in\Z^2}\left(\chi_{(R - \delta)\Omega - x} * \varphi_\delta (k)\right) \le \sum_{k\in\Z^2}\chi_{R \Omega -x }(k)
\le \sum_{k\in\Z^2}\left(\chi_{(R+\delta) \Omega -x} * \varphi_\delta (k)\right),
$$
and the result follows from the definition of $\dd$ in \eqref{discr}.
\end{proof}

\begin{lemma}\label{32}
Let $n\in\Z^2\setminus\{\Vec{0}\}$ and set $\delta = R^{-\frac{1}{2}}$.  Then $|b_{\delta, n}| \lesssim R|n|^{-1}$ when $n \le \sqrt{R}$, and $|b_{\delta, n}| \lesssim R^2|n|^{-3}$ when $n > \sqrt{R}$.
\end{lemma}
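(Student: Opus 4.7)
The plan is to compute the Fourier coefficients $a_{\delta,m}$ of $\dd_\delta$ via Poisson summation and then recover $b_{\delta,n}$ from the convolution identity
$$b_{\delta,n}=\sum_{m\in\Z^2}a_{\delta,m}a_{\delta,n-m},$$
which holds because $(\dd_\delta)^2$ has Fourier coefficients equal to the discrete convolution of those of $\dd_\delta$.

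First I would write $\dd_\delta(\Omega,x,R)=\sum_{k\in\Z^2}F(k+x)-R^2|\Omega|$ with $F:=\chi_{(R+\delta)\Omega}*\varphi_\delta$; Poisson summation then gives $a_{\delta,m}=\hat\chi_{(R+\delta)\Omega}(m)\,\hat\varphi_\delta(m)$ for $m\ne 0$, while the constant term satisfies $|a_{\delta,0}|=(2R\delta+\delta^2)|\Omega|\lesssim R^{1/2}$. Since $\Omega$ is smooth, strictly convex with nowhere vanishing Gaussian curvature, the standard stationary-phase estimate $|\hat\chi_\Omega(\xi)|\lesssim|\xi|^{-3/2}$ combined with scaling yields $|\hat\chi_{(R+\delta)\Omega}(m)|\lesssim R^{1/2}|m|^{-3/2}$; and because $\varphi\in C_c^\infty$, its dilate satisfies $|\hat\varphi_\delta(m)|\lesssim_N(1+\delta|m|)^{-N}$ for every $N$. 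Combining,
$$|a_{\delta,m}|\lesssim_N R^{1/2}|m|^{-3/2}\bigl(1+|m|/\sqrt R\bigr)^{-N},\qquad m\ne 0.$$

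The core analytic input is the two-dimensional convolution estimate
$$\sum_{m\ne 0,n}|m|^{-3/2}|n-m|^{-3/2}\lesssim|n|^{-1},$$
established by splitting into the regions $|m|\le|n|/2$, $|n-m|\le|n|/2$, and $|m|,|n-m|\gtrsim|n|$ and treating each subsum separately (the first two contribute $|n|^{-3/2}\cdot|n|^{1/2}$, and the tail $\sum_{|m|\gtrsim|n|}|m|^{-3}\lesssim|n|^{-1}$). For $|n|\le\sqrt R$ the mollifier weights are at most $1$, so this immediately yields $|b_{\delta,n}|\lesssim R|n|^{-1}$; the two boundary terms $m\in\{0,n\}$ together contribute at most $R|n|^{-3/2}$ and are absorbed.

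For $|n|>\sqrt R$ I would exploit the mollifier decay uniformly. By the triangle inequality $\max(|m|,|n-m|)\ge|n|/2$, and hence
$$(1+\delta|m|)^{-N}(1+\delta|n-m|)^{-N}\lesssim\bigl(|n|/\sqrt R\bigr)^{-N}$$
for every $m$; factoring this constant out of the sum and applying the convolution estimate once more gives $|b_{\delta,n}|\lesssim R(|n|/\sqrt R)^{-N}|n|^{-1}$, and taking $N=2$ produces the claimed bound $R^2|n|^{-3}$. The main technical obstacle is precisely this joint mollifier gain; once it is extracted it remains only to verify that the endpoint terms $a_{\delta,0}a_{\delta,n}$ also obey $R^2|n|^{-3}$ in this regime, which follows from the rapid decay $|\hat\varphi_\delta(n)|\lesssim(|n|/\sqrt R)^{-N}$ inherited from $\varphi\in C_c^\infty$.
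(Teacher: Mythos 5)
Your proposal is correct and follows essentially the same route as the paper: Poisson summation giving $a_{\delta,n}=\widehat{\chi}_{(R+\delta)\Omega}(n)\widehat{\varphi}(\delta n)$, the stationary-phase bound $|\widehat{\chi}_\Omega(\xi)|\lesssim|\xi|^{-3/2}$, the convolution identity for $b_{\delta,n}$, and the same three-region splitting of the sum $\sum_m|m|^{-3/2}|n-m|^{-3/2}\lesssim|n|^{-1}$. The one place you diverge --- using $\max(|m|,|n-m|)\ge|n|/2$ to extract a single uniform factor $(|n|/\sqrt{R})^{-N}$ from the mollifier weights when $|n|>\sqrt{R}$, rather than assigning different decay rates of $\widehat{\varphi}$ region by region as the paper does --- is a clean streamlining that yields the same bound $R^2|n|^{-3}$ upon taking $N=2$.
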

\begin{proof}
Let $a_{\delta,n}$ and $b_{\delta,n}$ be defined as above. Notice that $$ a_{\delta,\vec{0}} = (R+\delta)^2
|\Omega| - R^2 | \Omega| = 2 \delta R |\Omega| + \delta^2 |\Omega|.
$$
When $n \neq \vec{0}$, we write
$$
a_{\delta,n} = \int_{\mathbb{T}^2} \sum_{k \in \mathbb{Z}^2} \left(\chi_{(R+\delta)
\Omega -x} * \varphi_\delta (k)\right) e^{-2 \pi i n \cdot x} dx = \int_{\mathbb{R}^2}
\left(\chi_{(R+\delta)\Omega} * \varphi_\delta(x)\right) e^{- 2\pi i n \cdot x} dx,
$$
which implies that
$$
a_{\delta,n} = (R+\delta)^2 \widehat{\chi}_\Omega((R+\delta) n)
\widehat{\varphi}(\delta n),
$$
where $\widehat{\chi}_\Omega$ and $\widehat{\varphi}$ denote the Fourier
transforms of $\chi_\Omega$ and $\varphi$, respectively. By the assumptions on
$\Omega$ we have $|\widehat{\chi}_\Omega(\xi)| \lesssim
|\xi|^{-3/2}$, which implies that
\begin{equation} \label{aest}
|a_{\delta,n}| \lesssim R^{1/2} |n|^{-3/2} |\widehat{\varphi}(\delta n)|,
\end{equation}
where $n = (n_1,n_2)$ and $|n| = \sqrt{n_1^2+n_2^2}$. So given the bounds for
$|a_{\delta,\vec{0}}|$ and $|a_{\delta,n}|$ we have $|a_{\delta,
\vec{0}}a_{\delta, n}| \lesssim 2\delta R^{3/2}|n|^{-3/2}|\widehat{\varphi}(\delta
n)|.$ Since $b_{\delta,n}$ might be expressed as convolution of $a_{\delta, j}$, hence for $n \not =
\vec{0} = (0,0)$ we have 
\begin{equation} \label{convolution}
|b_{\delta,n}| \lesssim \sum_{j\in\Z^2} |a_{\delta,j}a_{\delta,n-j}| \lesssim
\frac{\delta R^{3/2}|\widehat{\varphi}(\delta
n)|}{|n|^{3/2}}+R\sum_{\substack{j\in\Z^2, \\ j\neq \vec{0}, j \neq
n}}\frac{|\widehat{\varphi}(\delta j)||\widehat{\varphi}(\delta
(n-j))|}{|n-j|^{3/2}|j|^{3/2}}.
\end{equation}

We proceed by considering two cases: $n\le \sqrt{R}$ and $n > \sqrt{R}$. We note that the choice of $\sqrt{R}$ as the threshold for the two cases is optimal for our argument: see Remark \ref{cutoff}.\\


\noindent\textbf{Case ($n \le \sqrt{R}$):} We use the fact that $|\widehat{\varphi}(\delta j)| = \mathcal{O}(1)$, and $|\widehat{\varphi}(\delta
(n-j))| = \mathcal{O}(1)$. So equation \eqref{convolution} simplifies to:
\begin{equation}\label{cccvolun}
|b_{\delta,n}| \lesssim \delta R^{3/2}|n|^{-3/2}+R\sum_{\substack{j\in\Z^2, \\ j\neq \vec{0}, j \neq n}}\frac{1}{|n-j|^{3/2}|j|^{3/2}}. 
\end{equation}
We break the sum on right hand side of \eqref{cccvolun} into the 3 sums:

\begin{multline}\label{regions}
\sum_{\substack{j\in\Z^2, \\j\neq\vec{0}, j\neq n}} \frac{1}{|n-j|^{3/2}|j|^{3/2}} = \sum_{0 <|j|\leq|n|/2} \frac{1}{|n-j|^{3/2}|j|^{3/2}}\\ +\sum_{0<|n-j|\le|n|/2} \frac{1}{|n-j|^{3/2}|j|^{3/2}} +\sum_{\substack{|j|> |n|/2, |n-j|>|n|/2 }}\frac{1}{|n-j|^{3/2}|j|^{3/2}},
\end{multline}
We estimate the first sum on the right hand side of \eqref{regions} by
\begin{equation}\label{s1}
\sum_{\substack{0<|j|\leq|n|/2}}\frac{1}{|n-j|^{3/2}|j|^{3/2}} \lesssim \frac{1}{|n|^{3/2}} \sum_{\substack{0<|j|\leq|n|/2}}\frac{1}{|j|^{3/2}}  
\lesssim \frac{1}{|n|^{3/2}} \int_{r = 1}^{|n|/2}\frac{1}{r^{3/2}}\cdot rdr
\lesssim \frac{1}{|n|},
\end{equation}
By symmetry (replace $j$ with $n-j$), the second sum on the right hand side of \eqref{regions} is also bounded by $|n|^{-1}$, and we estimate the third sum by
\begin{equation}\label{s3}
\sum_{\substack{|j|>|n|/2, |n-j|>|n|/2}}\frac{1}{|n-j|^{3/2}|j|^{3/2}} \lesssim \sum_{\substack{|j|>|n|/2, \\ j\neq n}}\frac{1}{|j|^3} 
\lesssim \int_{r = |n|/2}^{\infty}\frac{1}{r^{3}}\cdot rdr
\lesssim \frac{1}{|n|}.
\end{equation}
Combining equation \eqref{cccvolun}, \eqref{regions}, \eqref{s1}, and \eqref{s3} we have $|b_{\delta, n}| \lesssim \delta R^{3/2}|n|^{-3/2}+R|n|^{-1}$.\\

\noindent\textbf{Case ($n > \sqrt{R}$):} again we analyze the sum on right hand side of \eqref{convolution} in the 3 sums:
\begin{multline}\label{regions2}
\sum_{\substack{j\in\Z^2, \\j\neq\vec{0}, j\neq n}} \frac{|\widehat{\varphi}(\delta j)||\widehat{\varphi}(\delta
(n-j))|}{|n-j|^{3/2}|j|^{3/2}} = \sum_{0 <|j|\leq|n|/2} \frac{|\widehat{\varphi}(\delta j)||\widehat{\varphi}(\delta
(n-j))|}{|n-j|^{3/2}|j|^{3/2}}\\ +\sum_{0<|n-j|\le|n|/2} \frac{|\widehat{\varphi}(\delta j)||\widehat{\varphi}(\delta
(n-j))|}{|n-j|^{3/2}|j|^{3/2}} +\sum_{\substack{|j|> |n|/2, |n-j|>|n|/2}}\frac{|\widehat{\varphi}(\delta j)||\widehat{\varphi}(\delta
(n-j))|}{|n-j|^{3/2}|j|^{3/2}},
\end{multline}

For the first sum on the right hand side of \eqref{regions2}, we use the fact that $|\widehat{\varphi}(\delta j)| = \mathcal{O}(1)$,  and $|\widehat{\varphi}(\delta (n-j))| = \mathcal{O}(\delta^{-2}(n-j)^{-2})$.
For the second sum on the right hand side of \eqref{regions2}, we use $|\widehat{\varphi}(\delta j)| = \mathcal{O}(\delta^{-2}(n-j)^{-2})$, and $|\widehat{\varphi}(\delta (n-j))| = \mathcal{O}(1)$.
For the third sum on the right hand side of \eqref{regions2}, we use $|\widehat{\varphi}(\delta j)| = \mathcal{O}(\delta^{-1} j^{-1})$, and $|\widehat{\varphi}(\delta (n-j))| = \mathcal{O}(\delta^{-1}(n-j)^{-1})$.

So \eqref{regions2} is simplified to:
\begin{multline}\label{regions3}
\sum_{\substack{j\in\Z^2, \\j\neq\vec{0}, j\neq n}} \frac{|\widehat{\varphi}(\delta j)||\widehat{\varphi}(\delta
(n-j))|}{|n-j|^{3/2}|j|^{3/2}} = \sum_{0 <|j|\leq|n|/2} \frac{\delta^{-2}}{|n-j|^{7/2}|j|^{3/2}}\\ +\sum_{0<|n-j|\le|n|/2} \frac{\delta^{-2}}{|n-j|^{3/2}|j|^{7/2}} +\sum_{\substack{|j|> |n|/2, |n-j|>|n|/2}}\frac{\delta^{-2}}{|n-j|^{5/2}|j|^{5/2}},
\end{multline}
where $\delta$ is a constant to be determined. We estimate the first sum on the right hand side of \eqref{regions3} by
\begin{multline}\label{region21}
\sum_{\substack{0<|j|\leq |n|/2}}\frac{\delta^{-2}}{|n-j|^{7/2}|j|^{3/2}} \lesssim \frac{\delta^{-2}}{|n|^{7/2}} \sum_{\substack{0<|j|\leq |n|/2}}\frac{1}{|j|^{3/2}}
 \lesssim \frac{\delta^{-2}}{|n|^{7/2}} \int_{r = 1}^{|n|/2}\frac{1}{r^{3/2}}\cdot rdr
\\ \lesssim \delta^{-2}|n|^{-3},
\end{multline}
By symmetry (replacing $j$ with $n-j$), the second sum on the right hand side of \eqref{regions3} is also bounded by $\delta^{-2}|n|^{-3}$. We estimate the third sum on the right hand side of \eqref{regions3} by
\begin{equation}\label{region23}
\sum_{\substack{|j|>|n|/2, |n-j|>|n|/2}}\frac{\delta^{-2}}{|n-j|^{5/2}|j|^{5/2}} \lesssim \sum_{\substack{|j|>|n|/2, \\ j\neq n}}\frac{\delta^{-2}}{|j|^5} 
\lesssim \int_{r = |n|/2}^{\infty}\frac{\delta^{-2}}{r^{5}}\cdot rdr
= \delta^{-2}|n|^{-3}.
\end{equation}
Finally, for $n > \sqrt{R}$ sufficiently large, $|\widehat{\varphi}(\delta n)| = \mathcal{O}(\delta^{-2}n^{-2})$, so that
\begin{equation}\label{trail}
\frac{\delta R^{3/2}|\widehat{\varphi}(\delta
n)|}{|n|^{3/2}} \lesssim \frac{\delta^{-1} R^{3/2}}{|n|^{7/2}}    
\end{equation}
Combing equations \eqref{convolution}, \eqref{regions3}, \eqref{region21}, \eqref{region23}, \eqref{trail}, we have $|b_{\delta, n}| \lesssim \delta^{-1} R^{3/2}|n|^{-7/2} + \delta^{-2}R|n|^{-3}$.

If we set $\delta = R^{-\frac{1}{2}}$,  then $|b_{\delta, n}| \lesssim R|n|^{-1}$ when $n \le \sqrt{R}$, and that $|b_{\delta, n}| \lesssim R^2|n|^{-3}$ when $n > \sqrt{R}$. This concludes the proof of lemma \ref{32}.
\end{proof}
\begin{remark}\label{cutoff}
The choice $\sqrt{R}$ is optimal. In fact, if we set the cut-off to be $R^\varepsilon$, i.e. $b_{\delta,n} \lesssim \frac{R}{|n|}$ when $n \le R^\varepsilon$ and $b_{\delta,n} \lesssim \frac{R^2}{|n|^3}$ when $n > R^\varepsilon$, then our argument would give $$\int_{\T}|\dd (\Omega,x,R)|^4dx \lesssim \varepsilon\cdot R^2\log R + R^{(4-4\varepsilon)},$$ where we need $\varepsilon \ge \frac{1}{2}$ for the bump function $\varphi$ to decay sufficiently fast (so that $1 + 2\delta \le 2$). So the choice of $\sqrt{R}$ is optimal.
\end{remark}

\begin{remark}\label{cov0}
For the case where $n = \Vec{0}$, we again note that since $a_{\delta,\vec{0}} = 2 \delta R |\Omega| + \delta^2 |\Omega|$, and $|a_{\delta,n}|
\lesssim R^{1/2} |n|^{-3/2} |\widehat{\varphi}(\delta n)| $, it follows that $$|a_{\delta,
\vec{0}}a_{\delta, n}| \lesssim 2\delta R^{3/2}|n|^{-3/2}|\widehat{\varphi}(\delta
n)|.$$ Now since
\begin{equation} \label{convolution0}
|b_{\delta,\vec{0}}| \lesssim \sum_{j\in\Z^2} |a_{\delta,j}a_{\delta,-j}| \lesssim
\delta^2 R^{2} + R\sum_{\substack{j\in\Z^2, \\ j\neq \vec{0}}}\frac{|\widehat{\varphi}(\delta j)||\widehat{\varphi}(-\delta
j)|}{|j|^{3}},
\end{equation}
using the fact $|\widehat{\varphi}(\delta j)| = |\widehat{\varphi}(\delta j)| = \mathcal{O}(1)$ gives
$$
|b_{\delta, \vec{0}}|\lesssim \delta^2R^2+R\cdot\int_{r = 1}^\infty \frac{1}{r^3}rdr\lesssim\delta^2R^2+R,
$$
and setting $\delta = R^{-\frac{1}{2}}$ gives $|b_{\delta, \vec{0}}|\lesssim R$.
\end{remark}

We now give the proof of Theorem 2.1.
\subsection{Proof of Theorem 2.1}
\begin{proof}
By Parseval's identity, we have $$\int_{\T}(\dd_{\delta}(\Omega,x,R))^4dx = \sum_{n\in\Z^2} |b_{\delta, n}|^2. $$
Since by Remark \ref{cov0} we have $|b_{\delta, \vec{0}}| \lesssim R$, hence 
\begin{align*}
\int_{\T}|\dd_{\delta}(\Omega,x,R)|^4dx &\lesssim R^2+ \sum_{1\leq |n|<\sqrt{R}}\frac{R^2}{|n|^2}+ \sum_{|n|\geq \sqrt{R}}\frac{R^{4}}{|n|^6} \\ 
&\lesssim R^2 + R^2\int_{r=1}^{\sqrt{R}}r^{-2} rdr + R^4\int_{r=\sqrt{R}}^{\infty}r^{-6} rdr \lesssim R^2\log R,
\end{align*}
where in the last inequality we used the fact that $|n| \geq \sqrt{R}.$ A similar argument shows the same result for $\dd_{-\delta}$. Therefore by Lemma 2.1 we conclude that
$$
\int_{\mathbb{T}^2} |\dd(\Omega,x,R)|^4 dx \lesssim \int_{\mathbb{T}^2} \left(|\dd_{-\delta}(\Omega,x,R)|^4
+ |\dd_{\delta}(\Omega,x,R)|^4\right) dx \lesssim R^2 \log R,
$$
which completes the proof.
\end{proof}

\begin{remark}\label{better}
We note that Huxley in \cite{Huxley} has $$ b_n = \mathcal{O} \left(\frac{R}{|n|^2}\log(R|n|)\right)+\mathcal{O}\left(\frac{R^{\theta+\frac{1}{2}}}{|n|^{\frac{3}{2}}}\sqrt{\log(R|n|)}\right),$$ 
when $|n| > \sqrt{R}$, where $\theta > 0$ is a constant such that $\|\dd(\Omega,x,R)\|_{L^\infty} \lesssim R^\theta$ holds uniformly in $R$ provided that $R$ is sufficiently large. So from the above proof one can see that our bound for $b_{\delta,n}$ is stronger, which may be useful in certain situation. We would leave further discussion in section 4.
\end{remark}

\begin{remark}
We note further that in case $\Omega \in\R^2$, Gariboldi in \cite{Gariboldi}, and Colzani, Gariboldi, and Gigante in \cite{Colzani3} have shown the following:
\begin{theorem*}
$\|\dd(\Omega,x,R)\|_{L^p}\lesssim R^{1/2}$ for $2 \leq p < 4$.
\end{theorem*}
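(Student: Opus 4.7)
The plan is to interpolate between Kendall's $L^2$ bound and the $L^4$ bound of Theorem \ref{21} via the \emph{Hausdorff–Young inequality}, using the same mollified discrepancy $\dd_\delta$ with $\delta=R^{-1/2}$ that was set up in Section 3. Lemma 2.1 reduces the problem to estimating $\|\dd_{\pm\delta}(\Omega,x,R)\|_{L^p}$. By symmetry it suffices to work with $\dd_\delta$. For $p\in[2,4)$, set $q=p/(p-1)\in(4/3,2]$. Hausdorff–Young on the torus, applied to the Fourier series $\dd_\delta(x)=\sum_{n\in\Z^2}a_{\delta,n}e^{2\pi i n\cdot x}$, yields
\[
\|\dd_\delta(\Omega,x,R)\|_{L^p} \;\le\; \Bigl(\sum_{n\in\Z^2}|a_{\delta,n}|^q\Bigr)^{1/q}.
\]

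Next I would plug in the coefficient estimate already established in the proof of Lemma \ref{32}, namely $|a_{\delta,\vec 0}|\lesssim \delta R=R^{1/2}$ and, for $n\ne\vec 0$,
\[
|a_{\delta,n}|\;\lesssim\; R^{1/2}|n|^{-3/2}\,|\widehat{\varphi}(\delta n)|.
\]
The sum $\sum_n |a_{\delta,n}|^q$ is then split at the natural scale $|n|\sim 1/\delta=\sqrt{R}$. For $1\le|n|\le\sqrt{R}$ I use $|\widehat{\varphi}(\delta n)|=\mathcal{O}(1)$, so the contribution is
\[
R^{q/2}\sum_{1\le|n|\le\sqrt{R}}|n|^{-3q/2}\;\lesssim\; R^{q/2}\int_1^{\sqrt R} r^{1-3q/2}\,dr,
\]
which is $\mathcal{O}(R^{q/2})$ precisely because $q>4/3$ makes the exponent $1-3q/2<-1$ and the integral converges to a finite constant independent of $R$. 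For $|n|>\sqrt R$ the rapid decay of $\widehat{\varphi}$ (as used in Case $n>\sqrt R$ of Lemma \ref{32}) contributes only a lower-order $\mathcal{O}(1)$ tail. Adding the $n=\vec 0$ term $|a_{\delta,\vec 0}|^q\lesssim R^{q/2}$ gives $\sum_n|a_{\delta,n}|^q\lesssim R^{q/2}$, and taking the $q$-th root yields $\|\dd_\delta\|_{L^p}\lesssim R^{1/2}$. The same argument with $-\delta$ in place of $\delta$, combined with Lemma 2.1, gives $\|\dd(\Omega,x,R)\|_{L^p}\lesssim R^{1/2}$.

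\textbf{Main obstacle.} The argument is essentially clean, and the only delicate point is the borderline role of the exponent $q>4/3$: this is exactly where the condition $p<4$ is used, since at $p=4$ the sum $\sum_{|n|\le\sqrt R}|n|^{-2}$ produces the extra $\log R$ that appears in Theorem \ref{21}. Everything else — Hausdorff–Young, the coefficient decay, the split at $|n|\sim 1/\delta$, and the transfer from $\dd_\delta$ to $\dd$ — is already in place from Section 3, so no new machinery is needed beyond combining these ingredients in the right order. Note that this recovers Kendall's $L^2$ bound at the endpoint $p=2$ (where Hausdorff–Young degenerates to Parseval), making the whole argument a unified extension of the Kendall-style proof to the range $2\le p<4$.
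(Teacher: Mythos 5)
Your argument is correct and is precisely the Hausdorff--Young route that the paper itself indicates for this statement (the paper does not write out a proof but defers to \cite{Gariboldi} and \cite{Colzani3}), carried out with the mollification lemma and the coefficient bound $|a_{\delta,n}|\lesssim R^{1/2}|n|^{-3/2}|\widehat{\varphi}(\delta n)|$ already established in Section 3. One small inaccuracy: the tail $\sum_{|n|>\sqrt R}|a_{\delta,n}|^q$ is $\mathcal{O}(R^{1-q/4})$ rather than $\mathcal{O}(1)$ (the rapid decay of $\widehat{\varphi}$ cannot beat the lower limit $|n|=\sqrt R$ of the resulting integral), but since $1-q/4<q/2$ exactly when $q>4/3$ this term is still dominated by the main contribution and your conclusion stands.
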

This result can be proved with Hausdorff-Young inequality. For details about the proof check Gariboldi's \cite{Gariboldi}, or Colzani, Gariboldi, and Gigante's \cite{Colzani3}.
\end{remark}


\section{Proof of Theorem 2.2}
\subsection{Technical lemmas}
Recall that $$A(R,t):=\{y\in \R^2: R-t \le |y| \le R+t\},$$ with $|t| < 1$ and $R\ge 2$. Denote $|A(R,t)|$ its area. Define the discrepancy function  $\dd(A,\cdot,\cdot,\cdot): \T\times[1,\infty)\times(0,1)\rightarrow \R$ by $$
\dd (A,x,R,t) = \sum_{k\in\Z^2}\chi_{A(R,t)-x}(k)-|A(R,t)|.$$
Denote the Fourier coefficients of $\dd$ by
$$
c_n = \int_{\mathbb{T}^2} \dd (A,x,R,t) e^{-2\pi i n \cdot x} dx,
$$
for $n \in \mathbb{Z}^2$.  
Let $\widehat{\chi}_{A(R,t)}(\xi)$ be the Fourier transform of the indicator function ${\chi}_{A(R,t)}$. The following lemma appears in many places in literature (for example see \cite{Brandolini}); we give a proof for completeness:
\begin{lemma}
Suppose $|t| > 0, R\ge 1$. Then
$$
\widehat{\chi}_{A(R,t)}(\xi) = \frac{2}{\pi} R^{1/2} |\xi|^{-3/2} \sin
(-2\pi R|\xi| + 3\pi/4) \sin(2\pi t|\xi|) + \mathcal{O}(R^{-1/2} t |\xi|^{-3/2}).
$$
\end{lemma}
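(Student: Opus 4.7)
The plan is to exploit the rotational symmetry of $A(R,t)$ to reduce the Fourier transform to a one-dimensional Bessel integral, and then apply the classical large-argument asymptotic of $J_0$. Converting to polar coordinates and using the identity $\int_0^{2\pi} e^{-2\pi i r|\xi|\cos\theta}\, d\theta = 2\pi J_0(2\pi r|\xi|)$, I obtain
\begin{equation*}
\widehat{\chi}_{A(R,t)}(\xi) \;=\; \int_{R-t}^{R+t} 2\pi r\, J_0(2\pi r|\xi|)\, dr,
\end{equation*}
which is the natural starting point because it already carries a factor of $2t$ in the length of the integration interval, precisely the scale one would hope to extract in the final estimate.

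Substituting the asymptotic $J_0(z) = \sqrt{2/(\pi z)}\,\cos(z - \pi/4) + O(z^{-3/2})$ rewrites the integrand as $\tfrac{2 r^{1/2}}{|\xi|^{1/2}}\cos(2\pi r|\xi| - \pi/4) + O(r^{-1/2}|\xi|^{-3/2})$. Since $r \asymp R$ on the length-$2t$ interval of integration, the remainder term contributes at most $O(t R^{-1/2}|\xi|^{-3/2})$, already matching the target error. Replacing $r^{1/2}$ by $R^{1/2}$ in the main term and integrating using the antiderivative of cosine together with the product-to-sum identity $\sin(A+B) - \sin(A-B) = 2\cos A \sin B$ (with $A = 2\pi R|\xi| - \pi/4$, $B = 2\pi t|\xi|$) yields
\begin{equation*}
\frac{2R^{1/2}}{\pi|\xi|^{3/2}}\cos(2\pi R|\xi| - \pi/4)\sin(2\pi t|\xi|),
\end{equation*}
which equals the stated main term after invoking the elementary identity $\cos(2\pi R|\xi| - \pi/4) = \sin(-2\pi R|\xi| + 3\pi/4)$.

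The only delicate point is the additional error incurred by the substitution $r^{1/2} \mapsto R^{1/2}$. A naive pointwise estimate $|r^{1/2} - R^{1/2}| = O(tR^{-1/2})$, integrated against the cosine over an interval of length $2t$, gives $O(t^2 R^{-1/2}|\xi|^{-1/2})$, which dominates the target bound $O(tR^{-1/2}|\xi|^{-3/2})$ precisely when $t|\xi| > 1$. To resolve this, I perform one integration by parts in $r$, taking $u = r^{1/2} - R^{1/2}$ and $dv = \cos(2\pi r|\xi| - \pi/4)\,dr$. The boundary contribution is bounded by $O(tR^{-1/2}|\xi|^{-1})$ (using $|u|_{r=R\pm t} = O(tR^{-1/2})$ and $|v| = O(|\xi|^{-1})$), and the remaining integral $\int_{R-t}^{R+t} v\, du$ is similarly $O(tR^{-1/2}|\xi|^{-1})$ since $du = \frac{dr}{2r^{1/2}}$. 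Multiplying each by the external factor $2/|\xi|^{1/2}$ fits these contributions into $O(tR^{-1/2}|\xi|^{-3/2})$, and combining the main term with both error bounds gives the stated asymptotic.
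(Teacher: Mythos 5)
Your proof is correct, and it takes a genuinely different route from the paper's. The paper writes $\widehat{\chi}_{A(R,t)}(\xi)$ as the difference of two disk transforms $\frac{\rho}{|\xi|}J_1(2\pi\rho|\xi|)$ at $\rho=R\pm t$, peels off a $\frac{t}{|\xi|}\bigl(J_1+J_1\bigr)$ piece as error, and then must show that the difference of the two $J_1$-asymptotic remainders carries a factor of $t$; it does this by bounding the derivative of the remainder function $\psi$ and applying the mean value theorem on $[R-t,R+t]$. You instead write the transform as the single radial integral $\int_{R-t}^{R+t}2\pi r\,J_0(2\pi r|\xi|)\,dr$, integrate the leading $J_0$ asymptotic exactly (so the factor $\sin(2\pi t|\xi|)$ and the $t$-smallness of the error both emerge automatically from the length-$2t$ interval and the antiderivative), and control the amplitude variation $r^{1/2}-R^{1/2}$ by one integration by parts. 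It is worth noting that the two arguments need their respective ``extra cancellation'' step in complementary regimes: the paper's crude remainder bound $O(R^{-1/2}|\xi|^{-5/2})$ fits inside the target $O(tR^{-1/2}|\xi|^{-3/2})$ only when $t|\xi|\gtrsim 1$, forcing the mean value theorem argument for small $t|\xi|$, whereas your crude amplitude bound $O(t^2R^{-1/2}|\xi|^{-1/2})$ fits only when $t|\xi|\lesssim 1$, forcing the integration by parts for large $t|\xi|$ --- you correctly identified this as the one delicate point and resolved it. Your version is somewhat more self-contained (only the large-argument asymptotic of $J_0$ and elementary calculus; no need for the derivative asymptotic of $J_1$), while the paper's reuses the standard closed form for the disk; both yield the same error term. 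The only caveat, shared by both proofs, is that the large-argument Bessel asymptotics require $2\pi(R-t)|\xi|$ bounded below, which is harmless in the intended application $\xi=n\in\Z^2\setminus\{\vec 0\}$, $R\ge 2$, $|t|<1$.
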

\begin{proof}
We have
$$
\widehat{\chi}_{A(R,t)}(\xi) = \frac{R+t}{|\xi|} J_1(2 \pi (R+t)|\xi|) -
\frac{R-t}{|\xi|} J_1(2\pi(R-t)|\xi|).
$$
Recall that
$$
J_1(\rho) = \sqrt{\frac{2}{\pi \rho}} \cos(\rho - 3\pi/4) +
\mathcal{O}(\rho^{-3/2}).
$$
It follows that
$$
\frac{t}{|\xi|} \left( J_1(2\pi (R+t) |\xi|) + J_1(2\pi (R-t) |\xi|) \right) =
\mathcal{O}(t R^{-1/2} |\xi|^{-3/2}).
$$
Thus,
$$
\widehat{\chi}_{A(R,t)}(\xi) = \frac{R}{|\xi|} \left( J_1(2 \pi (R+t)|\xi|) -
J_1(2\pi(R-t)|\xi|) \right) + \mathcal{O}(t R^{-1/2} |\xi|^{-3/2}).
$$
We now consider
$$
\psi(R+t) := J_1(2 \pi (R+t)|\xi|) - \frac{1}{\pi} |\xi|^{-1/2}
(R+t)^{-1/2}\cos(2 \pi (R+t) |\xi|-3\pi/4).
$$
Recall that 
$$
J'_1(\rho) = -\sqrt{\frac{2}{\pi}} \rho^{-1/2} \sin(\rho - 3\pi/4) +
\mathcal{O}(\rho^{-3/2}).
$$
Thus, when $R \le a \le R+t$ we have
$$
\psi'(a) = \mathcal{O}(R^{-3/2} |\xi|^{-3/2}).
$$
Thus, a Taylor expansion of $\psi$ gives
$$
\psi(R+t) = J_1(2\pi R |\xi|) - \frac{2}{\pi} |\xi|^{-3/2} R^{-1/2} \cos(2\pi R
|\xi| -3\pi/4) + \mathcal{O}(t R^{-3/2} |\xi|^{-3/2}).
$$
We conclude that
$$
\psi(R+t) - \psi(R-t) = \mathcal{O}(t R^{-3/2} |\xi|^{-3/2}).
$$
Thus
\begin{multline*}
\widehat{\chi}_{A(R,t)}(\xi) = \frac{1}{\pi} R^{-1/2}|\xi|^{-3/2} \left(
\cos(2\pi(R+t)|\xi| -3\pi/4) - \cos(2\pi(R-t)|\xi| -3\pi/4) \right)\\ +
\mathcal{O}(t R^{-1/2} |\xi|^{-3/2}).
\end{multline*}
Since
\begin{multline*}
\cos(2\pi(R+t)|\xi| -3\pi/4) - \cos(2\pi(R-t)|\xi| -3\pi/4) \\
= -2 \sin(2\pi R |\xi| - 3\pi/4) \sin(2\pi t |\xi|),
\end{multline*}
it follows that
$$
\widehat{\chi}_{A(R,t)}(\xi) = \frac{2}{\pi} R^{1/2} |\xi|^{-3/2} \sin (-2\pi R|\xi| +
3\pi/4) \sin(2\pi t|\xi|) + \mathcal{O}(R^{-1/2} t |\xi|^{-3/2}),
$$
as was to be shown.
\end{proof}
We also require the following lemma to prove Theorem \ref{22}:

\begin{lemma}\label{annulemma}
Suppose that $2\le p<4$. Then, $$\|\dd (A,x,R,t)\|_{L^p}\lesssim R^{1/2}\left( t^{\frac{p}{8-2p}} \right).$$ 
\end{lemma}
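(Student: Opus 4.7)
The plan is to apply the Hausdorff--Young inequality on the torus to convert the $L^p$ bound into an $\ell^q$ estimate on the Fourier coefficients of $\dd(A,x,R,t)$, where the dual exponent $q=p/(p-1)\in(4/3,2]$ lies in the valid range. By Poisson summation
$$\dd(A,x,R,t)=\sum_{n\in\Z^2\setminus\{0\}} c_n\,e^{-2\pi i n\cdot x},\qquad c_n=\widehat{\chi}_{A(R,t)}(n),$$
with the $n=0$ term cancelling the area. The preceding lemma then gives
$$|c_n|\;\lesssim\; R^{1/2}|n|^{-3/2}|\sin(2\pi t|n|)|+R^{-1/2}t|n|^{-3/2}.$$

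For the main piece I would insert the bound $|\sin(2\pi t|n|)|\le\min(1,2\pi t|n|)$ and split the sum $\sum_{n\ne 0}|c_n|^q$ at the natural threshold $|n|\approx 1/t$. For $1\le|n|\le 1/t$, replacing the sine by $t|n|$ gives
$$t^q\sum_{1\le|n|\le 1/t}|n|^{-q/2}\;\lesssim\; t^q\int_1^{1/t}r^{1-q/2}\,dr\;\lesssim\; t^{3q/2-2},$$
which is finite at $r\to 1/t$ precisely because $q<4$. For $|n|>1/t$, using $|\sin|\le 1$ gives
$$\sum_{|n|>1/t}|n|^{-3q/2}\;\lesssim\;\int_{1/t}^{\infty}r^{1-3q/2}\,dr\;\lesssim\; t^{3q/2-2},$$
with convergence at infinity requiring $q>4/3$, i.e., $p<4$. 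One checks separately that the error term $R^{-1/2}t|n|^{-3/2}$ summed in $\ell^q$ contributes at most $R^{-1/2}t$, which is strictly subordinate to the main term for every $p\in[2,4)$. Combining, $\|c\|_{\ell^q}\lesssim R^{1/2}\,t^{3/2-2/q}$, and Hausdorff--Young yields
$$\|\dd(A,x,R,t)\|_{L^p}\;\le\;\|c\|_{\ell^q}\;\lesssim\; R^{1/2}\,t^{(4-p)/(2p)},$$
upon substituting $q=p/(p-1)$; this reduces to $R^{1/2}t^{1/2}$ at $p=2$, matching the Parnovski--Sidorova bound.

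The main obstacle is really bookkeeping rather than ingenuity: the two range conditions $q<4$ (for the low-frequency sum) and $q>4/3$ (for the high-frequency tail) together carve out exactly the stated hypothesis $2\le p<4$, and one must verify that the auxiliary error term in the Fourier-coefficient estimate never dominates in any sub-regime of $|n|$. At the borderline $p=4$ the high-frequency integral becomes logarithmic, which is precisely the source of the $\log R$ correction appearing in Theorem \ref{21}.
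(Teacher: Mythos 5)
Your proof is correct and follows essentially the same route as the paper's: Hausdorff--Young with dual exponent $q=p/(p-1)$, the Fourier-coefficient bound from Lemma 4.1, and a split of the $\ell^q$ sum at $|n|\approx 1/t$ (the paper parametrizes $p=4-\varepsilon$ instead of working with $q$ directly, and absorbs the $R^{-1/2}t|n|^{-3/2}$ error term into the main term rather than estimating it separately as you do). One remark: the exponent you obtain, $t^{(4-p)/(2p)}$, is exactly what the paper's own computation yields (its final line gives $t^{\varepsilon/(8-2\varepsilon)}$ with $\varepsilon=4-p$) and is what is actually invoked in the proof of Theorem \ref{22}; the exponent $t^{p/(8-2p)}$ appearing in the lemma's statement looks like a typo in which $p$ was substituted for $\varepsilon$, so your version is the one to trust.
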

\begin{proof}
Denote the fourier coefficients of $\dd(A,x,R,t)$ as
$$
c_n = \int_{\mathbb{T}^2}\dd(A,x,R,t)e^{- 2\pi i n \cdot x} dx,
$$
so that
$$
c_{n} = \int_{\mathbb{T}^2} \left(\sum_{k \in \mathbb{Z}^2} \chi_{A(R,t)-x}(k)\right) e^{-2 \pi i n \cdot x} dx = \int_{\mathbb{R}^2}
\chi_{A(R,t)}(x) e^{- 2\pi i n \cdot x} dx,
$$
which implies that
$$
c_{n} = \widehat{\chi}_{A(R,t)}(n)
,
$$
where $\widehat{\chi}_{A(R,t)}$ denotes the Fourier transform of $\chi_{A(R,t)}$. Lemma 4.1 implies that $|\widehat{\chi}_{A(R,t)}(\xi)| \lesssim
R^{1/2}|\xi|^{-3/2}\sin(2\pi t|\xi|)$, so that
\begin{equation} \label{annuliest}
c_n = \mathcal{O}(R^{1/2} |n|^{-3/2}\sin(2\pi t|n|)),
\end{equation}
for $n\in\Z^2\setminus\{\Vec{0}\}$. Again we note that $c_{\Vec{0}} = 0$.
The Hausdorff-Young inequality states that for $\dd(A,\cdot,\cdot,\cdot): \T\times[1,\infty)\times(0,1)\rightarrow \R$ with Fourier coefficients $(c_n)_{n\in\Z^2}$, we have 
$$
\left( \int_{\mathbb{T}^2} |\dd (A,x,R,t)|^p dx \right)^{1/p} \lesssim \left( \sum_{n \in
\mathbb{Z}^2} |c_n|^q \right)^{1/q},
$$
when $2 \le p \le \infty$ and $1/p + 1/q = 1$. Fix $2 \ge \varepsilon > 0$, and set
$p = 4-\varepsilon$ such that $q = (4-\varepsilon)/(3-\varepsilon)$. It follows
from the Hausdorff-Young inequality and \eqref{annuliest} that
\begin{equation} \label{estt}
\left( \int_{\mathbb{T}^2} \left| \dd (A,x,R,t)\right|^{4-\varepsilon} dx \right)^\frac{1}{4-\varepsilon}
\lesssim \left( \sum_{n \in \mathbb{Z}^2, n \not =\vec{0}}
c_n^{\frac{4-\varepsilon}{3-\varepsilon}}
\right)^{\frac{3-\varepsilon}{4-\varepsilon}},
\end{equation}
 So now consider the summation $$ \sum_{n \in \Z^2\setminus\{\Vec{0}\}}
c_n^{\frac{4-\varepsilon}{3-\varepsilon}}
 =  \sum_{0<|n| \le 1/t }
c_n^{\frac{4-\varepsilon}{3-\varepsilon}}
 +  \sum_{|n| > 1/t }
c_n^{\frac{4-\varepsilon}{3-\varepsilon}}
$$ 
So when $|n|\le 1/t$, we have that $\sin(2\pi t|n|) < 2\pi t|n|$, so that $$c_n = \mathcal{O}\left(R^{1/2} |n|^{-3/2}\sin(2\pi t|n|)\right) = \mathcal{O}(R^{1/2}|n|^{-1/2}t).$$ 
Hence we have 
\begin{align*}
    \sum_{n \in \Z^2\setminus\{\Vec{0}\}}
c_n^{\frac{4-\varepsilon}{3-\varepsilon}} &= \sum_{0<|n| \le 1/t }c_n^{\frac{4-\varepsilon}{3-\varepsilon}}
 +  \sum_{|n| > 1/t }
c_n^{\frac{4-\varepsilon}{3-\varepsilon}} \\
&\lesssim R^{\frac{4-\varepsilon}{6-2\varepsilon}}\left(t^{\frac{4-\varepsilon}{3-\varepsilon}}\sum_{0<|n|\le 1/t}n^{\frac{\varepsilon-4}{6-2\varepsilon}} + \sum_{|n| > 1/t} n^{\frac{3\varepsilon-12}{6-2\varepsilon}} \right).
\end{align*}
Now since
$$t^{\frac{4-\varepsilon}{3-\varepsilon}}\sum_{0 <|n| \le 1/t}n^{\frac{\varepsilon-4}{6-2\varepsilon}} + \sum_{|n| > 1/t} n^{\frac{3\varepsilon-12}{6-2\varepsilon}} \lesssim t^{\frac{4-\varepsilon}{3-\varepsilon}}\int_{r = 1}^{1/t} r^{\frac{\varepsilon-4}{6-2\varepsilon}}\cdot rdr + \int_{r = 1/t}^{\infty} r^{\frac{3\varepsilon-12}{6-2\varepsilon}}\cdot rdr \lesssim  t^{\frac{\epsilon}{6-2\epsilon}}.$$

Hence we have
$$
\left( \sum_{n \in \Z^2\setminus\{\Vec{0}\}}
c_n^{\frac{4-\varepsilon}{3-\varepsilon}}
\right)^{\frac{3-\varepsilon}{4-\varepsilon}} \lesssim R^{1/2}\left( t^{\frac{\varepsilon}{8-2\varepsilon}} \right),
$$
which concludes the proof for lemma \ref{annulemma}.
\end{proof}
\begin{remark}
Note in particular, setting $t = R^{-1/2}$ gives us
$$
\left\| \dd (A,x,R,t) \right\|_{L^{4-\varepsilon}(\mathbb{T}^2)}=\left(
\int_{\mathbb{T}^2} \left| \dd (A,x,R,t) \right|^{4-\varepsilon} dx \right)^{\frac{1}{4-\varepsilon}}
\lesssim R^{\frac{8-3\varepsilon}{16-4\varepsilon}},
$$
where we emphasize that the implicit constant depends on $\varepsilon > 0$.
\end{remark}
We now give the proof of Theorem \ref{22}.
\subsection{Proof of Theorem 2.2}
\begin{proof}
Let $p \ge 2$ be a fixed constant, and let $2\leq p_0 \leq p$. By monotonicity of integration, we have 
\begin{multline}
\int_{\T}|\dd(A,x,R,t)|^pdx \lesssim \int_{\T}|\dd(A,x,R,t)|^{p_0} (\sup_{x\in\T}|\dd(A,x,R,t)|^{p-p_0})dx \\ \lesssim \|\dd(A,x,R,t)\|_{L^{p_0}}^{p_0}\cdot \|\dd(A,x,R,t)\|_{L^\infty}^{(p-p_0)}.
\end{multline}
Now let $D$ be the unit disk, and let $\theta\in\R$ be any positive constant such that $|\dd(D,x,R)| \lesssim R^\theta$ holds uniformly for all $R$, if $R$ is sufficiently large. From the fact that
$$
\dd(A,x,R,t) = \dd(D,x,R+t) - \dd (D,x,R-t) \lesssim \mathcal{O}(R^\theta),
$$
and the results from lemma \ref{annulemma}, we have
\begin{equation}\label{minimize}
  \|\dd(A,x,R,t)\|_{L^p}^p \lesssim  R^{\frac{p_0}{2}}t^{\frac{4-p_0}{2}}R^{\theta(p-p_0)}= (R^{\theta p}t^2)\cdot(R^{\frac{1}{2}-\theta}t^{-\frac{1}{2}})^{p_0}.
\end{equation}
So now we want to minimize \eqref{minimize}, and we split our discussion into two cases:

\textbf{Case 1:} when $R^{\frac{1}{2}-\theta}t^{-\frac{1}{2}}\ge 1$, or equivalently $t\leq R^{1-2\theta}$, we shall set $p_0 $ as small as possible, so that $p_0 =2$. Hence substitute back to \eqref{minimize} and take $1/p$-th power we have
$$
\|\dd(A,x,R,t)\|_{L^p}\lesssim (Rt)^{\frac{1}{p}}R^{\frac{\theta(p-2)}{p}}.
$$

\textbf{Case 2:} when $R^{\frac{1}{2}-\theta}t^{-\frac{1}{2}}< 1$, or equivalently $t > R^{1-2\theta}$, we shall set $p_0$ as large as possible, so we further split into two sub cases:
If $p < 4$, then setting $p_0 = p$ and taking $1/p$-th power on both sides yields 
$$
\|\dd(A,x,R,t)\|_{L^p}\lesssim R^{\frac{1}{2}}t^{\frac{(4-p)}{2p}}.
$$
If $p \ge 4$, then we note lemma \ref{annulemma} holds only for moments less than 4, so we set $p_0 = 4-\varepsilon_0$ for some fixed $\varepsilon_0 > 0$. So \eqref{minimize} becomes
$$
\|\dd(A,x,R,t)\|_{L^p}^p\lesssim R^{\theta(p-4)+2}\cdot R^{\frac{\varepsilon_0}{2}(2\theta -1 +\alpha)},
$$
assuming $t = R^\alpha$ for some $\alpha<0$. Since $t> R^{1-2\theta} $, hence $\alpha + 2\theta - 1 > 0$. Thus setting $\varepsilon = \frac{\varepsilon_0}{2}(\alpha + 2\theta - 1)$ and taking the $1/p$-th power on both sides yields $$
\|\dd(A,x,R,t)\|_{L^p}\lesssim R^{\frac{\theta(p-4)+2+\varepsilon}{p}}\cdot 
$$
This concludes the proof of Theorem 2.2.
\end{proof}

\section{Discussion}
We discuss some limitations and directions for further research.
\subsection{Moments of bounded convex domains.} First, it seems that we cannot get better fourth moment estimation using current bump function and convolution techniques: as we noted in Remark \ref{cutoff}, changing the cutoff does not improve the result on fourth moment further. 

Moreover, despite that our result on $b_{\delta,n}$ is slightly stronger than Huxley's original estimates for $b_n$, it is still poor in estimating higher moments. To demonstrate this point, we take for example the $L^8$ norm: using Hausdorff-Young Inequality, we obtained the following estimates:

\begin{multline*}\label{8thmoement}
    \left(\int_{\T}|\dd_\delta (\Omega,x,R)^2|^4\right)^{1/4} \lesssim \left(\sum_{n\in\Z^2}b_{\delta,n}^{4/3}\right)^{3/4} = \left(\sum_{|n|<\sqrt{R}}b_{\delta,n}^{4/3}+\sum_{|n|\ge\sqrt{R}}b_{\delta,n}^{4/3}\right)^{3/4}\\ \lesssim R^{5/4},
\end{multline*}
so a similar mollification argument $|\dd(\Omega,x,R)|^8 \lesssim |\dd_\delta (\Omega,x,R)|^8+|\dd_{-\delta} (\Omega,x,R)|^8$ gives us $$\|\dd(\Omega,x,R)\|_{L^8} \lesssim R^{5/8}.$$ If we repeat the techniques we used to prove Theorem 2.1, we will have increasingly worse upper bound for higher moments (assuming the summation of powers of $b_{\delta,n}$ still converges): the upper bound approaches $\mathcal{O}(R)$ as the left hand side power tends to infinity. 

One possible research direction is to make use of the cosine term in the original Bessel integral of the Fourier coefficient $a_{\delta,n}$. In fact, from Hardy's identity we have 
$$
a_n = \mathcal{O}(\frac{R}{|n|}|J_1(2\pi|n|R)|),
$$
where $J_1$ is the Bessel function of first kind.  We note that since
$$
J_1(s) = \sqrt{\frac{2}{\pi s}}\cos(s-\frac{3\pi}{4}) +\mathcal{O}(s^{-3/2}),
$$
as $s\rightarrow\infty$ (see Stein \cite{Stein1}), thus in fact
$$
a_n = \frac{R^{1/2}}{|n|^{3/2}}\cos (2\pi |n|R - \frac{3\pi}{4})+ \mathcal{O}(R^{-1/2}|n|^{-5/2}),
$$
so making use of the cosine term in bounding $b_{\delta, n}$ might help us obtain better bounds.

\subsection{Moments of thin annuli}\label{discussion}
We further note that the bounds for the thin annuli could be improved. Our current results do not seem to be sharp, as Sinai \cite{Sinai}, and Colzani, Gariboldi, and Gigante \cite{Colzani1} noted that the discrepancy function of the thin annuli would ideally follow Poisson distribution, so we might be able to improve the current $4th$ moment estimates to $\mathcal{O}(R)$ for $t = R^{-1/2}$.

It is also worth noting that even if the original Gauss circle conjecture were to be true (i.e. $\theta = 1/2+\varepsilon$ for any fixed $\varepsilon > 0$), for sufficiently large $p$ (for example $p > 4$) Theorem \ref{22} above suggests that $\|\dd(A,x,R,t)\|_{L^p}\lesssim R^{1/2+\varepsilon_1}$ for any fixed $\varepsilon_1 >0$. This suggests when $t$ becomes sufficiently large, the cancellation effect from the inner circle of the thicker annulus becomes less prominent as compared to thinner annulus. Thus a possible further research direction is to study what is the critical value for $t$.

\section*{Acknowledgements} Special thanks has to be given to Nicholas Marshall, whose invaluable suggestions have made this paper possible. Also I'd like to thank Princeton University, Department of Mathematics for offering Undergraduate Mathematics Summer Funding to facilitate this research.

\begin{thebibliography}{99}

\bibitem{Brandolini}
L. Brandolini, L. Colzani, G. Gigante, G. Travaglini, \textit{$L^p$ and Weak–$L^p$ estimates for the number of integer points in translated domains}, Math. Proc. Camb. Phil. Soc. \textbf{159}, (2015), 471-480

\bibitem{Colzani1}
{L. Colzani, B. Gariboldi, G.Gigante, \textit{Variance of Lattice Point Counting in Thin Annuli}, J. Geom. Anal. (2020), https://doi.org/10.1007/s12220-020-00479-y}

\bibitem{Colzani2}
{L. Colzani, B. Gariboldi, G. Gigante, \textit{Mixed $L^{p}(L^{2})$ norms of the lattice point discrepancy}, {Trans. Amer. Math. Soc.}, \textbf{371}({11}) (2019), {7669–7706}}

\bibitem{Colzani3}
{L. Colzani, B. Gariboldi, G. Gigante, \textit{$L^{p}$ Norms of the Lattice Point Discrepancy}, G. J Fourier Anal Appl, \textbf{25}(4) (2019), 2150--2195}

\bibitem{van}
J. G. van der Corput, \textit{Uber Gitterpunkte in der Ebene}, Math. Ann. \textbf{81} (1920), 1-20.

\bibitem{Gariboldi}
{B. Gariboldi, \textit{Norms of the lattice point discrepancy}, Ph. D. dissertation, Universita degli studi di Milano-Bicocca (2017)
}

\bibitem{Gauss}
 C. F.Gauss, \textit{De nexu inter multitudinem classium, in quas formae binariae secundi gradus distribuuntur,
earumque determinantem.}  Werke 2, 269–291.


\bibitem{Hardy}
G.H. Hardy, \textit{On the Expression of a Number as the Sum of Two Squares}, Quart. J. Math. 46, (1915), 263–283. 

\bibitem{Huxley} 
M. N. Huxley, \textit{A Fourth Power Discrepancy Mean}. Monatshefte für Mathematik,  \textbf{173}(2), (2013), {231–238}.

\bibitem{Huxley2}
M. N. Huxley, \textit{Exponential sums and lattice points}, Proc. Lond. Math. Soc. \textbf{3}(87), (2003), 591–609.

\bibitem{Kendall}
D.G. Kendall: \textit{On the number of lattice points inside a random oval}. Q. J. Math. Oxf. \textbf{19}, (1948), 1–26 

\bibitem{Parnovski}
L. Parnovski, N.Sidorova, \textit{Critical Dimensions for counting Lattice Points in Euclidean Annuli.} Math. Model. Nat. Phenom. \textbf{5} (2010). 293--316.

\bibitem{Sierpinski}
W. Sierpiński, \textit{Sur un probleme du calcul des fonctions asymptotiques}, Prace Mat.-Fiz. \textbf{17} (1906), 77--118.

\bibitem{Sinai}
Y.G. Sinai, \textit{Poisson distribution in a geometric problem}, Adv. Soviet Math. \textbf{3} (1991), 199--214

\bibitem{Stein1}
{E. M. Stein, R. Shakarchi}, "Princeton Analysis Series II: Complex Analysis. Princeton Univ. Press, (2001).

\bibitem{Stein2}
{E. M. Stein, R. Shakarchi}, "Princeton Analysis Series IV: Functional Analysis. Princeton Univ. Press, (2001).

\bibitem{Voronoi}
G. Voronoi, \textit{Sur une fonction transcendente et ses applications a la sommation de quelques
series}, Ann. Ecole Norm. Sup (3) \textbf{21} (1904), 207--267, 459--533.


\end{thebibliography}

\end{document}